\crefname{hypothesis}{Hypothesis}{Hypotheses}
\title{On optimal polynomial geometric interpolation of circular arcs according to the Hausdorff distance
\thanks{Submitted to the editors 2020.
\funding{The first author was supported by the Slovenian Research Agency
program P1-0292 and the grants J1-8131, N1-0064, and N1-0083. The second author was supported in part by the program
P1-0288 and the grant J1-9104 by the same agency.}}}
\author{Ale\v{s} Vavpeti\v{c}\thanks{Faculty of Mathematics and Physics, University of Ljubljana, Slovenia and
Institute of Mathematics, Physics and Mechanics, Lju\-blja\-na, Slovenia
  (\email{ales.vavpetic@fmf.uni-lj.si},\email{emil.zagar@fmf.uni-lj.si}).}
\and Emil \v{Z}agar\footnotemark[2]}
\def\bfm#1{\boldsymbol{#1}} 
\def\RR{\mathbb{R}}
\def\cc{c}
\def\ss{s}
\begin{document}

\maketitle

\begin{abstract}
  The problem of the optimal approximation of circular arcs by parametric polynomial curves is considered.
  The optimality relates to the Hausdorff distance and have not been studied yet in the literature.
  Parametric polynomial curves of low degree are used and a geometric continuity is prescribed at the boundary
  points of the circular arc. A general theory about the existence and the uniqueness of the optimal approximant
  is presented and a rigorous analysis is done for some special cases for which the degree of the polynomial curve
  and the order of the geometric smoothness differ by two. This includes practically interesting cases of
  parabolic $G^0$, cubic $G^1$, quartic $G^2$ and quintic $G^3$ interpolation. Several numerical examples are
  presented which confirm theoretical results.
\end{abstract}

\begin{keywords}
  geometric interpolation, circular arc, best approximation, Hausdorff distance
\end{keywords}

\begin{AMS}
65D05, 65D17, 41A05, 41A50
\end{AMS}

\section{Introduction}\label{sec:intro}
An efficient representation of circular arcs is an important practical as well as theoretical issue.
Since they are fundamental geometric objects they have been widely used in practical applications,
such as geometric design, geometric modelling and computer aided manufacturing. On the other hand, they
have been studied already in ancient history and they are still attracted by present researchers.

It is well known that a circular arc does not possess a polynomial parametric representation.
Although it can be exactly represented in a parametric rational form, it is still an interesting
question how close to a circular arc a parametric polynomial can be. The answer definitely depends on
the measure of the error. In this paper we shall study the well known Hausdorff
distance of certain classes of parametric polynomial curves to a circular arc.
The geometric parametric polynomial interpolants of a circular arc will be considered as approximants and the best one
will be determined according to the Hausdorff distance. This will provide geometric polynomial curves which
can be as close as possible to the circular arcs if both are considered as sets of points in $\RR^2$.
Although a lot of research has been done on optimal approximation of circular arcs by various parametric polynomial curves,
the optimality of the approximant has not been studied yet according to the well known Hausdorff distance.
It seems that the reason is that the problem becomes much more difficult as if some other measure of the error is considered, such as the simplified radial error, e.g., which will be formally defined later.

The paper is organized as follows. In \cref{sec:prelim} we recall some preliminaries which will be later used in the
process of the analysis and the construction of the best approximants. In the next section we consider the best
approximation of a circular arc by parabolic curves which only interpolate the boundary points of the circular arc.
The problem was already
studied in \cite{Morken-91-circles} but according to a simplified distance. In \cref{sec:general_error} the simplified version of the
error is studied since the results can later be used also for the Hausdorff  distance. In order to demonstrate the new approach,
the revision of the parabolic approximation is revisited in \cref{sec:parabolic_case_revisited}.
In the next three sections the cubic, the quartic and the quintic cases are studied in detail. The paper is
concluded by \cref{sec:conclusion} with some final remarks.

\section{Preliminaries}\label{sec:prelim}
Let us denote a considered circular arc by $\bfm{c}$. More precisely, $\bfm{c}$
will be parametrized as $\bfm{c}\colon[-\varphi,\varphi]\to\RR^2$, $0<\varphi\leq \pi/2$.
Since it is enough to consider the unit circular arc centered at the origin of a particular
coordinate system and symmetric with respect to the first coordinate axis, we can assume that
$\bfm{c}(\alpha)=(\cos\alpha,\sin\alpha)^T$. A polynomial approximation of $\bfm{c}$ will be
denoted by $\bfm{p}_n\colon[-1,1]\to\RR^2$, where $\bfm{p}_n=(x_n,y_n)^T$ and $x_n$, $y_n$
are scalar polynomials of degree at most $n$. In order to simplify the notation, we
will write $\cc:=\cos\varphi$ and $\ss:=\sin\varphi$. Note that $0\leq c<1$.
The Bernstein-B\'ezier representation
of $\bfm{p}_n$ will be considered, i.e.,
\begin{equation}\label{p_Bern_Bez_form}
  \bfm{p}_n(t)=\sum_{j=0}^n B_j^n(t)\,\bfm{b}_j,
\end{equation}
where $B_j^n$, $j=0,1,\dots,n$, are (reparameterized) Bernstein polynomials over $[-1,1]$,
given as
\begin{equation*}
  B_j^n(t)=\binom{n}{j}\left(\frac{1+t}{2}\right)^{j}\left(\frac{1-t}{2}\right)^{n-j},
\end{equation*}
and $\bfm{b}_j\in\RR^2$, $j=0,1,\dots,n$, are the control points. Note that since $\bfm{c}$ is symmetric
with respect to the first coordinate axis, also the best parametric polynomial interpolant must
posses the same symmetry, i.e., $\bfm{b}_j=r(\bfm{b}_{n-j})$, $j=0,1,\dots,n$,
where $r\colon\RR^2\to\RR^2$ is the reflection
over the first coordinate axis.
Let ${\mathcal G}_n^k$ denote a class of symmetric $G^k$ polynomial interpolants of $\bfm{c}$
of degree at most $n$.
More precisely, ${\mathcal G}_n^k$ consists of those parametric polynomials of degree at most $n$,
which geometrically interpolate boundary points of $\bfm{c}$ with an order $k$.

The radial error $\phi_{n,k}$ and a simplified radial error
$\psi_{n,k}$ of an interpolant $\bfm{p}_n\in{\mathcal G}_n^k$ will be defined as
\begin{align}
  \phi_{n,k}(t)&=\sqrt{x_n^2(t)+y_n^2(t)}-1=\|\bfm{p}_n(t)\|_2-1,\label{def:radial}\\
  \psi_{n,k}(t)&=x_n^2(t)+y_n^2(t)-1=\|\bfm{p}_n(t)\|_2^2-1,\quad t\in[-1,1],\label{def:simp_radial}
\end{align}
where $\|\cdot\|_2$ is the euclidean norm. Note that
\begin{equation}\label{rel:psi_widetildepsi}
  \phi_{n,k}=\sqrt{\psi_{n,k}+1}-1,
\end{equation}
and also note that both errors depend on the angle $\varphi$ and some parameters arising from the
$G_n^k$ interpolation.
It was shown in \cite{Eisele-Chebyshev-94} that the optimal $\bfm{p}_n$
implies minimal radial or simplified
radial error \eqref{def:radial} or \eqref{def:simp_radial} if and only if
$\phi_{n,k}$ or $\psi_{n,k}$ alternates $2n-2k-1$, i.e., the error must have $2n-2k-1$ extrema
with the same absolute value. The existence
of the optimal curve $\bfm{p}_n$ is guaranteed by the continuity and compactness argument.
However, the uniqueness and its construction are much more challenging issues.
Results for some particular cases in the case of
simplified radial error \eqref{def:simp_radial} where obtained in the pioneering paper
\cite{Dokken-Daehlen-Lyche-Morken-90-CAGD} but no optimality was proved. The parabolic approximation via
the $G^0$ parabolic curves has been presented in \cite{Morken-91-circles}. Later on several authors
considered plenty of particular cases of low degree geometric interpolants of the circular arcs.
In \cite{Goldapp-91-CAGD-circle-cubic}, the author has studied the optimality of the cubic parametric approximation
of low order geometric smoothness, but no rigorous proofs of the optimality have been provided.
In \cite{Ahn-Kim-CAD-quartics-2007},  the authors have studied the same problem and they have been able to prove
the optimality of the solution for the cubic $G^1$ and quartic $G^2$ cases. Although they claim that they obtained
optimal approximant according to the the Hausdorff distance,
this is not the case, since they have used its simplification. Particular cases of approximation of circular arcs by quartic B\'{e}zier
curves can be found in \cite{HurKim-2011} in \cite{Xiaoming_Licai-2010-JCADCG} and in \cite{Kovac-Zagar-quartics-2014}.
Quintic polynomial approximants of various geometric smoothness have been studied in
 \cite{Fang-98-CAGD-circle-quintic}. The methods how to approximate the whole circle can be found in
\cite{Jaklic-circle-CAGD-2016}  and  in \cite{Jaklic-Kozak-best-circle-2017}. In the latter paper the Hausdorff distance has been
considered but only for the case where interpolation of the boundary points is not required. A general framework for
the approximation of circular arcs by parametric polynomials was given in \cite{Vavpetic-Zagar-general-circle-19}
and some particular cases following this approach can be found in \cite{Vavpetic-CAGD20}.
The optimal approximants of maximal geometric smoothness are characterized in \cite{Knez-Zagar-max-geometric-2018}.\\
All the above studies involve the simplified radial error \eqref{def:simp_radial} and it seems that no results are available in
the literature on optimal approximation with respect to the the radial error \eqref{def:radial}.
However, it is important to study the existence and the uniqueness of the optimal approximants with respect to the radial error,
since it was  was shown in \cite{Jaklic-Kozak-best-circle-2017} that the radial error induces the Hausdorff
distance in this case.\\
In order to see that things are much more complicated if the radial error is considered,
let us take a quick look at the parabolic case first.

\section{Parabolic $G^0$ case}\label{sec:parabolicG0direct}

Only three control points have to be considered here. Since we are looking for a $G^0$ interpolant, we must have
\begin{equation*}
  \bfm{b}_0=(\cc,-\ss)^T,\quad
  \bfm{b}_1=(d,0)^T,\quad
  \bfm{b}_2=(\cc,\ss)^T
\end{equation*}
where $d\in\RR$ is an unknown parameter.
Note that due to the convex hull property we obviously have $d>1$.
Our goal is to find a parameter $d$ such that
$\phi_2:=\phi_{2,0}$ alternates three times with minimal absolute
extremal value.
It is easy to verify that the extrema of $\phi_2$ occur at
\begin{equation*}
  t_0=0,\quad t_1=\frac{\sqrt{d^2+\cc^2-2}}{d-\cc},\text{ and } t_2=-t_1.
\end{equation*}
Since we require $0<t_1<1$ the unknown parameter $d$ is bounded by $\sqrt{2-c^2}<d<\tfrac{1}{c}$.
The function $\phi_2$ is an even function and the alternation condition can be expressed as
$\phi_{2}(0)+\phi_{2}(t_1)=0$. This leads to
\begin{equation}\label{eq:f(d)}
f(d):=d^4-8d^3+2(c^2+4c+6)d^2-8c(4-c)d+c^4-8c^3+12c^2+4=0.
\end{equation}
During the process of determining $f$ some square roots are involved and we observe that $d<4-c$.
Thus
$d$ must be on the interval $I_d:=[\sqrt{2-c^2},\inf\{4-c,\tfrac{1}{c}\}]$.
Since $\phi_{2}(0)=\tfrac{d-(2-c)}{2}$ is the maximal value of $\phi_2$ on $[-1,1]$, we must find
a zero of $f$ in $I_d$ which is closest to $2-c>\sqrt{2-c^2}$.
The following lemma reveals its location.
\begin{lemma}\label{lem:fG0}
Let $f$ be as in  \eqref{eq:f(d)}. Then (see \cref{fig:quadraticfboth}):
\begin{enumerate}
\item $f(3-3c+c^2)<0$,
\item $f(d)>0$, for all $d\in (-\infty,2-c]$,
\item $f'(d)<0$, for all $d\in [2-c,3-3c+c^2]$.
\end{enumerate}
\end{lemma}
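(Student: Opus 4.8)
The plan is to collapse all three assertions into statements about a single shifted variable. Since the two landmarks satisfy $2-\cc\leftrightarrow x=0$ and $3-3\cc+\cc^2=(2-\cc)+(1-\cc)^2\leftrightarrow x=(1-\cc)^2$, I would set $x=d-(2-\cc)$ and introduce $v:=1-\cc\in(0,1]$. Taylor expanding $f$ about $d=2-\cc$ (the derivative values factor cleanly, e.g.\ $f(2-\cc)=4(1-\cc)^4$ and $f'(2-\cc)=-8(1-\cc)^2(\cc+2)$) gives
\[
 g(x):=f\big((2-\cc)+x\big)=x^4-4(1-v)x^3+4(2v^2-6v+1)x^2-8v^2(3-v)x+4v^4 .
\]
In these terms claim (1) reads $g(v^2)<0$, claim (2) reads $g(x)>0$ for $x\le 0$, and claim (3) reads $g'(x)<0$ on $[0,v^2]$.

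For (1) I would just evaluate: $g(v^2)=v^4\,h(v)$ with $h(v)=v^4+4v^3+4v^2-16v-16$, so it suffices to show $h<0$ on $(0,1]$. Here $h'(v)=4(v^3+3v^2+2v-4)$, and the cubic $v^3+3v^2+2v-4$ has positive derivative $3v^2+6v+2$, hence changes sign exactly once; thus $h$ decreases then increases and takes its maximum on $[0,1]$ at an endpoint, where $h(0)=-16$ and $h(1)=-23$ are both negative. For (3) I would bound $g'(x)=4x^3-12(1-v)x^2+8(2v^2-6v+1)x-8v^2(3-v)$ crudely on $0\le x\le v^2\le 1$: discarding the nonpositive $x^2$-term, using $x^3\le v^6$ and $(2v^2-6v+1)x\le v^2$ (because $2v^2-6v+1\le 1$) yields $g'(x)\le 4v^6+8v^3-16v^2=4v^2(v^4+2v-4)<0$, the last factor being at most $-1$ on $(0,1]$.

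The genuine obstacle is (2): the quartic $g$ really oscillates for $x<0$ (indeed $g'$ changes sign there), so there is no monotonicity to exploit and one must prove global positivity. Writing $x=-w$ with $w\ge 0$, the plan is to exhibit two complementary sum-of-squares identities. Completing the square off the constant term gives
\[
 g(-w)=\big(w^2+2(1-v)w-2v^2\big)^2+8v(2-v)\,w\,(2v-w),
\]
whose correction is $\ge 0$ on $[0,2v]$ and vanishes there only at $w=0$ and $w=2v$, where the squared factor equals $4v^4$ and $4v^2(2-v)^2$; completing the square off the top three coefficients gives
\[
 g(-w)=\big(w^2+2(1-v)w+2v(v-4)\big)^2+16v(2-v)\,(w-2v),
\]
whose correction is $\ge 0$ on $[2v,\infty)$ and vanishes only at $w=2v$, where the squared factor again equals $4v^2(2-v)^2$. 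Since $v(2-v)>0$ for $v\in(0,1]$, each identity forces $g(-w)>0$ on its range, and the two ranges cover $[0,\infty)$; hence $g(x)>0$ for all $x\le 0$, which is claim (2).

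The real work — and where I expect to spend the effort — is discovering these two decompositions and noticing that their ranges of validity meet exactly at $w=2v$; once they are written down the positivity is immediate, and parts (1) and (3) are then routine bookkeeping.
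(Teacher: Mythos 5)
Your proof is correct: I verified that the shifted quartic $g(x)=f\bigl((2-c)+x\bigr)=x^4-4(1-v)x^3+4(2v^2-6v+1)x^2-8v^2(3-v)x+4v^4$ with $v=1-c\in(0,1]$ is exactly right, that both of your square-plus-correction identities expand precisely to $g(-w)$, that their validity ranges meet at $w=2v$ where the squared factors take the positive values $4v^4$ and $4v^2(2-v)^2$, and that the bounds in parts (1) and (3) hold as stated. The paper works from the same landmarks but with different certificates, and the comparison is instructive. For part (2) it substitutes $d=2-c-x$ and writes the result in a single manifestly positive form,
\begin{equation*}
f(2-c-x)=4(1-c)^4+x\left(8(1-c)^2c+4(2-2c-x)^2+x\left((2-2c-x)^2+4c^2\right)\right),
\end{equation*}
so positivity on $x\ge0$ is immediate by inspection; your expectation that the oscillation of $g$ for $x<0$ forces two complementary sum-of-squares identities glued at $w=2v$ is therefore too pessimistic --- one positive-combination identity suffices --- though your pair of identities is equally rigorous and has the side benefit of locating where $g(-w)$ comes closest to vanishing. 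For part (3) the paper parametrizes the interval as $d=(1-c)^2x+(2-c)$, $x\in[0,1]$, and exhibits $f'$ there as $-4(1-c)^2$ times an explicitly positive expression (an exact identity), whereas you discard the $x^2$-term and over-bound the others to get $g'(x)\le 4v^2(v^4+2v-4)\le-4v^2<0$; yours is cruder but shorter, and both are airtight. For part (1) the two computations are the same factorization in different clothing: the paper's $f(3-3c+c^2)=-(1-c)^4\left(22(1-c^2)+(1-c^4)+8c(1+c^2)\right)$ is manifestly negative on sight, while your $g(v^2)=v^4\left(v^4+4v^3+4v^2-16v-16\right)$ needs your (correct) small monotonicity argument for the quartic factor.
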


\begin{proof}
We have $f(3-3c+c^2)=-(1-c)^4(22(1-c^2)+(1-c^4)+8c(1+c^2))<0$.
The inequality $f(d)>0$ for all $d\in (-\infty,2-c]$ is equivalent to the inequality
$g(x)>0$ for all $x\in [0,\infty)$, where $g(x)=f(-x-c+2)$. The later inequality holds because
\begin{align*}
g(x)&=4 (1-c)^4+x \left(8 (1-c)^2 c+4 (2-2 c-x)^2+x \left((2-2 c-x)^2+4 c^2\right)\right),
\end{align*}
which is obviously positive for $x\in[0,\infty)$ and $c\in [0,1)$.\\
The inequality $f'(d)<0$ for all $d\in [2-c,3-3c+c^2]$ is equivalent to the inequality
$h(x)<0$ for all $x\in [0,1]$, where $h(x)=f'((1-2c+c^2)x+2-c)$. The later inequality holds because
\begin{align*}
h(x)&=-4(1-c)^2\left(3 x^2 (1-c)^2 c+4 \left(1-x c^2\right)+\left(4-x^2 (1-c)^3\right) x (1-c)+2(c+x)\right),
\end{align*}
which is obviously negative for $x\in[0,1]$ since $c\in [0,1)$. This concludes the proof of the lemma.
\end{proof}

Recall that our goal is to find such $d\in I_d$ which is a solution of $f(d)=0$ and as close to $2-c$ as
possible. By \cref{lem:fG0}, all solutions must be greater than $2-c$ but precisely one on
$(2-c,c^2-3c+3)\subset I_d$ which implies the best quadratic $G^0$ interpolant.
Unfortunately, its closed form is to complicated to be written here and it is
more efficient to find it numerically for a given central angle $2\varphi$.
The graphs of the radial error and
the simplified radial error of the optimal parabolic $G^0$ interpolants according to the radial and the simplified radial error
for $\varphi=\pi/2$ can be observed in \cref{fig:G20}. In \cref{tab:G20}
numerical values of optimal parameters $d_r$ according to the radial error and optimal parameters
$d_s$ according to the simplified radial error \cite{Morken-91-circles}
with corresponding values of $\phi_{2,0}$ are shown for several values of an inner angle $2\varphi$.
\begin{figure}[!htb]
\minipage{0.49\textwidth}
\centering
\includegraphics[width=0.75\linewidth]{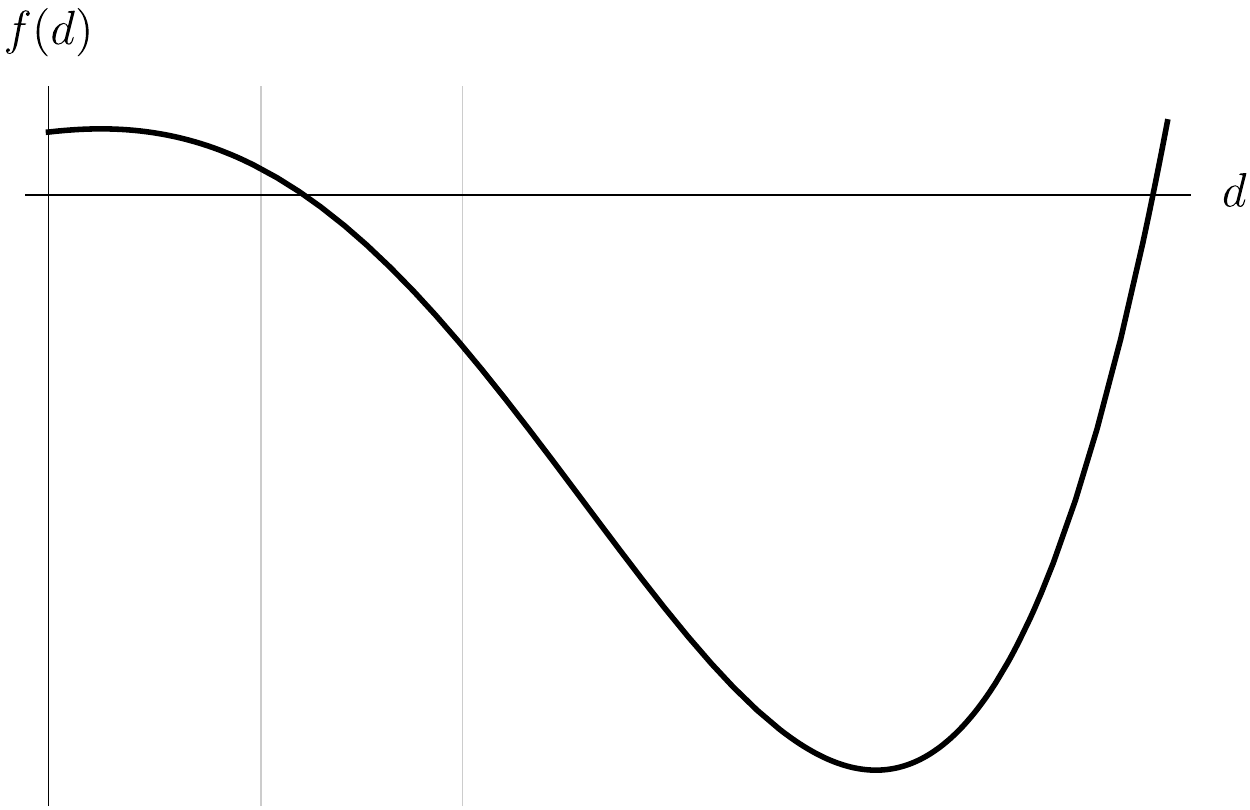}
\endminipage\hfill
\minipage{0.49\textwidth}
\centering
\includegraphics[width=0.75\linewidth]{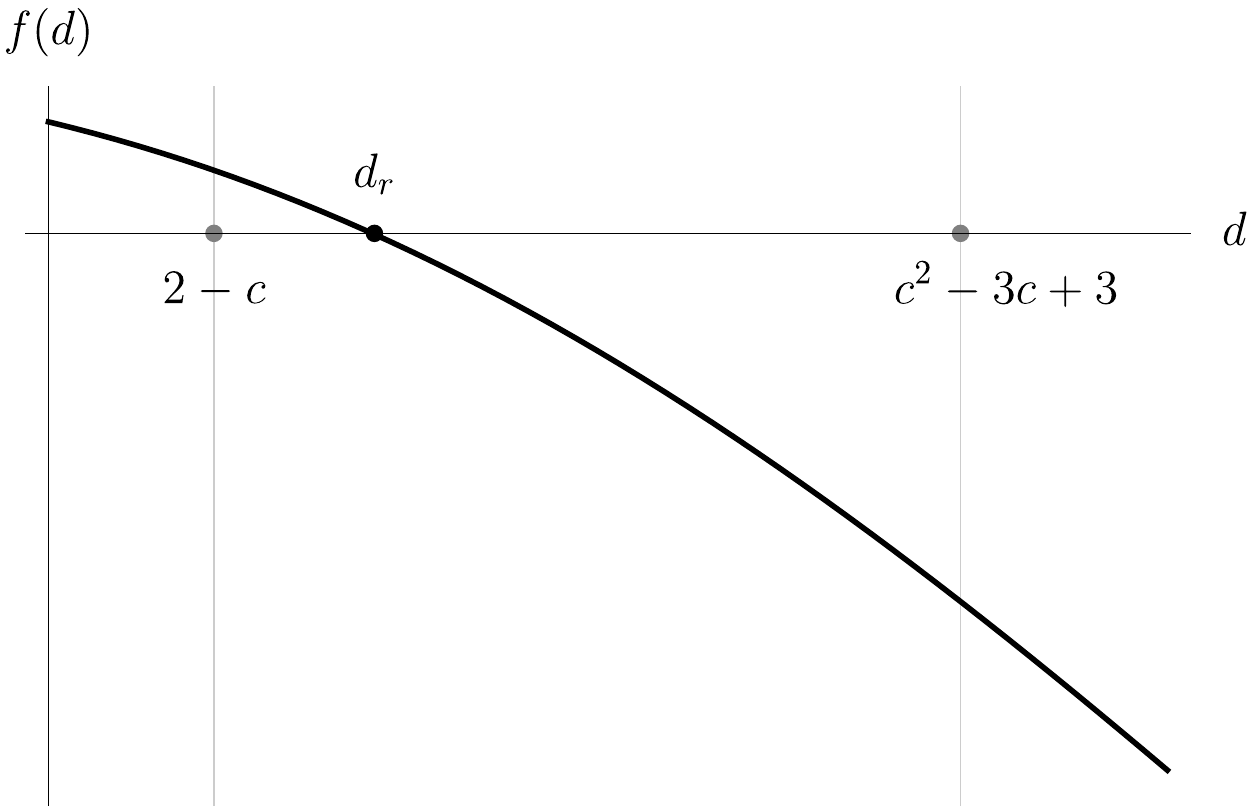}
\endminipage\hfill
\minipage{0.49\textwidth}
\centering
\includegraphics[width=0.75\linewidth]{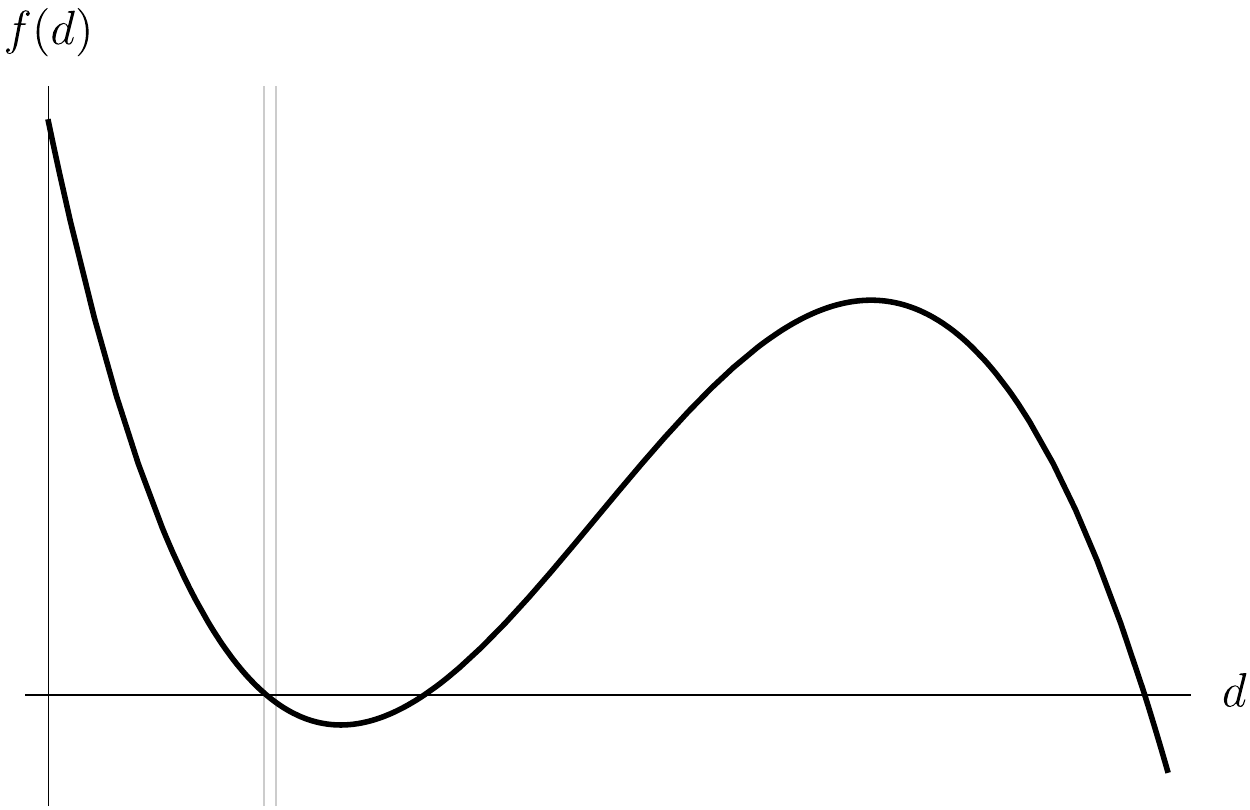}
\endminipage\hfill
\minipage{0.49\textwidth}
\centering
\includegraphics[width=0.75\linewidth]{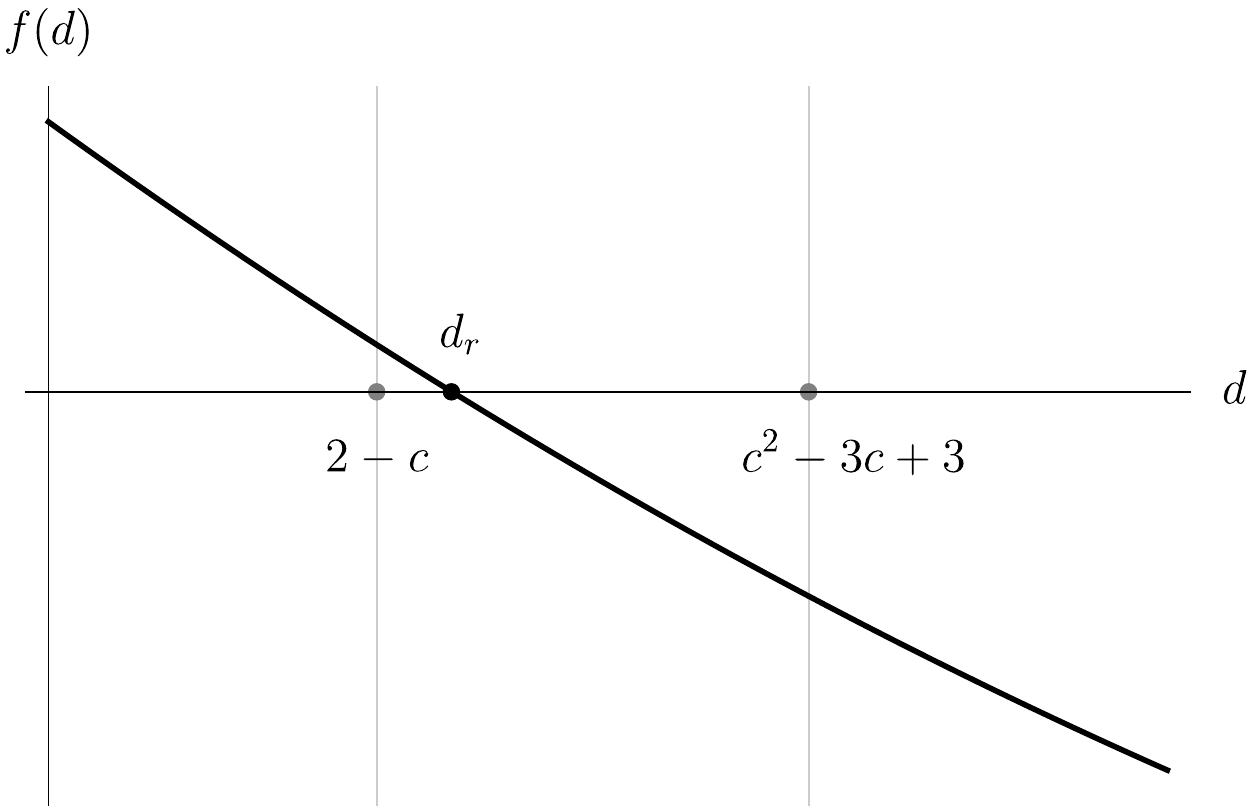}
\endminipage\hfill

\caption{Two characteristic graphs of $f$ (left) and the zoom of their particular segments
between two gray vertical lines involving
the optimal parameter $d_r$ (right). Note that in the neighborhood of $d_r$ the graph of $f$
can be either convex or concave.}\label{fig:quadraticfboth}
\end{figure}

\begin{figure}[h]
  \centering
  \includegraphics[width=0.5\textwidth]{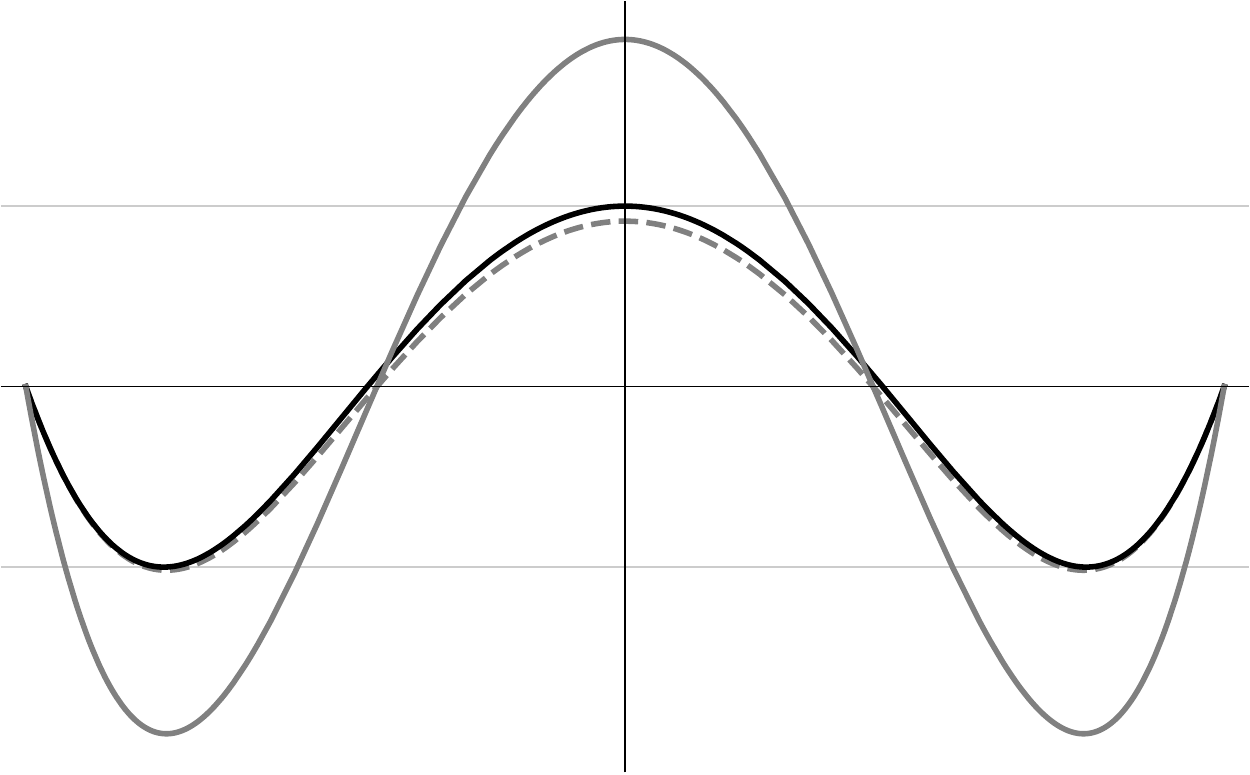}
  \caption{Graphs of the radial error $\phi_{2,0}$ and simplified radial error $\psi_{2,0}$
  in the case of the parabolic $G^0$
  interpolation with $\varphi=\pi/2$.
  The black graph is the optimal radial error $\phi_{2,0}$, the gray one is the optimal simplified radial error
  $\psi_{2,0}$ and the gray dashed is the radial error for the optimal parameter $d_s$ according to the
  simplified radial error. Observe that the minima of the gray dashed graph are smaller than the minima of
  the black one.}
  \label{fig:G20}
\end{figure}

\begin{table}[h]
  \begin{equation*}
    \begin{array}{|c|r|r|r|r|}\hline
        \multicolumn{1}{|c|}{\varphi} & \multicolumn{1}{|c|}{d_r} & \multicolumn{1}{|c|}{\phi_{2,0}}
        & \multicolumn{1}{|c|}{d_s} & \multicolumn{1}{|c|}{\phi_{2,0}}\\ \hline
      \pi/2 & 2.21535 & 1.07676\times 10^{-1} & 2.19737 & 1.09554\times 10^{-1}\\ \hline
      \pi/3 & 1.54728 & 2.36383\times 10^{-2} & 1.54643 & 2.37668\times 10^{-2}\\ \hline
      \pi/4 & 1.30843 & 7.76732\times 10^{-3} & 1.30834 & 7.78280\times 10^{-3}\\ \hline
      \pi/6 & 1.13713 & 1.57677\times 10^{-3} & 1.13712 & 1.57746\times 10^{-3}\\ \hline
      \pi/8 & 1.07713 & 5.03728\times 10^{-4} & 1.07713 & 5.03800\times 10^{-4}\\ \hline
      \pi/12 & 1.03427 & 1.00191\times 10^{-4} & 1.03427 & 1.00194\times 10^{-4}\\ \hline
    \end{array}
  \end{equation*}
  \caption{The table of optimal parameters $d_r$ according to the radial error $\phi_{2,0}$
  and optimal parameters $d_s$ according to the simplified radial error $\psi_{2,0}$ for
  several different inner angles $2\varphi$ of a circular arc $\bfm{c}$. In the last column the values of
  the radial error $\phi_{2,0}$ with respect to $d_s$ are shown. It is clearly seen that
  optimal parameters tend to each other when $\varphi\to0$ but they are not the same.}
  \label{tab:G20}
\end{table}

The parabolic $G^0$ interpolation is just a special case with $k=n-2$.
Note that for $n\geq 2$ the class ${\mathcal G}_n^{n-2}$ is a one-parametric family
of $G^{n-2}$ polynomial interpolants of degree $n$ which will now be studied in detail.

\section{Analysis of $\psi_{n,n-2}$}\label{sec:general_error}

We have seen in the previous section that the analysis of the optimal parabolic $G^0$ interpolant
related to the radial error leads to the analysis of the quartic polynomial \eqref{eq:f(d)}.
It is expected that the problem becomes more and more complicated for higher degrees $n$. Thus
an alternative way of the analysis should be followed.\\
The symmetry and the restriction to $k=n-2$ imply that
a one-parametric family of interpolants has to be studied.
The existence and possible uniqueness of the optimal interpolant in this case has not been proven yet.
It is a natural cornerstone for the study of the multi-parameter family of interpolants
which will be our future work.
We will see that it is  basically enough to study the simplified radial error $\psi_{n,n-2}$,
since its properties can be used to prove similar properties of the radial error $\phi_{n,n-2}$.
The simplified radial error $\psi_{n,n-2}$ becomes
\begin{equation}\label{def:psi1par}
  \psi_{n,n-2}(t)=:\psi_n(t,d)=(t^2-1)^{n-1}q_n(t,d),
\end{equation}
where $d$ is the parameter arising from the $G^{n-2}$ interpolation and $q_n(\cdot,d)$
is an even quadratic polynomial (see, e.g., \cite{Vavpetic-Zagar-general-circle-19}).
Recall again that $q_n$ depends also on $\varphi$, which will not be
explicitly written. Let us first observe the following general result.

\begin{lemma}\label{lem:theta}
Let $\theta_i(t)=(t^2-1)^m r_i(t)$,
where $r_i$ is an even quadratic polynomial, $i=1,2$. If $r_1(0)<r_2(0)$ and $r_1(1)\leq r_2(1)$, then
\begin{enumerate}
  \item $\theta_1(t)>\theta_2(t)$ for all $t\in(-1,1)$, provided $m$ is an odd number,
  \item $\theta_1(t)<\theta_2(t)$ for all $t\in(-1,1)$, provided $m$ is an even number.
\end{enumerate}
\end{lemma}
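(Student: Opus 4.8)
The plan is to study the sign of the difference $\theta_1(t)-\theta_2(t)$ directly. Since both $\theta_i$ share the common factor $(t^2-1)^m$, I would write
\begin{equation*}
  \theta_1(t)-\theta_2(t)=(t^2-1)^m\bigl(r_1(t)-r_2(t)\bigr),
\end{equation*}
so that the whole statement reduces to determining the sign of each of the two factors on the open interval $(-1,1)$, and then multiplying.

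The factor $(t^2-1)^m$ is the easy one: for $t\in(-1,1)$ we have $t^2-1<0$, so this factor is positive when $m$ is even and negative when $m$ is odd. The substance lies in the sign of $\Delta(t):=r_1(t)-r_2(t)$. Because each $r_i$ is an even quadratic, I would write $r_i(t)=a_it^2+b_i$, so that $\Delta(t)=\delta(t^2)$ with the affine function $\delta(s)=(a_1-a_2)s+(b_1-b_2)$. The two hypotheses then say exactly that $\delta(0)=r_1(0)-r_2(0)<0$ and $\delta(1)=r_1(1)-r_2(1)\le0$. Writing $\delta$ as the convex combination $\delta(s)=(1-s)\delta(0)+s\delta(1)$ of its endpoint values and noting that for $s\in[0,1)$ the weight $1-s$ is strictly positive while $s\ge0$, I obtain $\delta(s)<0$ on $[0,1)$. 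Since $t\in(-1,1)$ forces $s=t^2\in[0,1)$, this shows $\Delta(t)<0$ throughout $(-1,1)$.

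Combining the two signs finishes the argument: on $(-1,1)$ the product $(t^2-1)^m\Delta(t)$ is positive when $m$ is odd (a negative factor times a negative one) and negative when $m$ is even (a positive factor times a negative one), which are precisely the two claimed alternatives. I do not expect a genuine obstacle here; the only point demanding a little care is that one hypothesis is strict and the other is not, so I must be sure that the strict inequality $r_1(0)<r_2(0)$ survives across the entire half-open range $s\in[0,1)$. This is exactly what the convex-combination representation of the affine $\delta$ guarantees, since the strictly positive weight $1-s$ sits on the strictly negative endpoint value $\delta(0)$ and keeps $\delta(s)$ bounded away from $0$, even at $s=0$.
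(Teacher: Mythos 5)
Your proof is correct, but it establishes the key inequality by a genuinely different mechanism than the paper. Both arguments share the same skeleton: factor the difference as $(t^2-1)^m\bigl(r_1(t)-r_2(t)\bigr)$, determine the sign of $(t^2-1)^m$ from the parity of $m$, and show $r_1(t)<r_2(t)$ strictly on $(-1,1)$. For that last step the paper argues by contradiction and root counting: if $r_1(\tau)\geq r_2(\tau)$ for some $\tau\in(-1,1)$, then evenness together with the hypotheses at $t=0$ and $t=1$ forces the two quadratics to intersect at least four times on $[-1,1]$, impossible for polynomials of degree two. You instead substitute $s=t^2$, observe that the difference becomes the affine function $\delta(s)=(1-s)\delta(0)+s\delta(1)$, and read off $\delta(s)<0$ on $[0,1)$ directly from $\delta(0)<0$ and $\delta(1)\leq 0$. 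Your route is more elementary and more transparent about the interplay between the strict and non-strict hypotheses: the paper's intersection count quietly absorbs the borderline case $r_1(\tau)=r_2(\tau)$ (where one must still rule out a tangential touching), whereas your convex-combination identity handles it with no case analysis, and it also proves both parity cases simultaneously since the parity enters only through the sign of the outer factor. One phrase to fix: $\delta$ is \emph{not} ``bounded away from $0$'' on all of $[0,1)$ --- if $\delta(1)=0$ then $\delta(s)\leq(1-s)\delta(0)\to 0$ as $s\to 1$ --- but you only need pointwise strict negativity, which your argument does deliver, so this is a slip of wording rather than of logic.
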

\begin{proof}
  Let us prove only the first part of the lemma since the proof for the other one is similar.
  For an odd $m$ we observe that $(t^2-1)^m<0$ for all $t\in(-1,1)$.
  By assumption we have $r_1(0)<r_2(0)$ and $r_1(1)\leq r_2(1)$. If there is $\tau\in(-1,1)$ such that
  $r_1(\tau)\geq r_2(\tau)$, the even quadratic polynomials $r_1$ and $r_2$ must have at least four
  intersections on $[-1,1]$ which is a contradiction. Thus $r_1(t)<r_2(t)$ for all $t\in(-1,1)$ and
  consequently $\theta_1(t)>\theta_2(t)$ for all $t\in(-1,1)$.
\end{proof}

The last lemma is crucial in the proof of the existence and the uniqueness
of the best $G^{n-2}$ interpolants with respect to the radial error.
First note that it implies that two graphs of the simplified radial error
\cref{def:psi1par} can not intersect on $(-1,1)$ for two different values $d_1$ and $d_2$,
provided that $q_n(0,d_1)\neq q_n(0,d_2)$ and
\begin{equation}\label{eq:qnineq}
  (q_n(0,d_1)-q_n(0,d_2))(q_n(1,d_1)-q_n(1,d_2))\geq 0.
\end{equation}
The last relation also implies that the same is true for the graphs of the radial error
$\phi_{n,n-2}(t,d)=:\phi_n(t,d)=\sqrt{\psi_n(t,d)+1}-1$ (see \cref{fig:G20}).
By \cite{Eisele-Chebyshev-94} the error function of the best interpolant must alternate
three times, i.e., it must have the unique zero $\tau^*\in(0,1)$ which implies
the smallest possible alternations. The construction of such an interpolant relies on the following
facts:
\begin{itemize}
  \item[(P1)] There exists an interval $I_n\subseteq \RR$ such that for any $\tau\in(0,1)$
  the equation $q_n(\tau,d)=0$ has the unique solution $d\in I_n$. If there is another solution
  $d'\not\in I_n$ of the same equation, then $d$ must induce better interpolant than $d'$.
  \item[(P2)] The functions $q_n(0,\cdot)$ and $q_n(1,\cdot)$ are both increasing or both decreasing functions
  on $I_n$ and the first one must be injective.
\end{itemize}
This suggests the following algorithm for finding $\tau^*$ and consequently
the best interpolant:\\
Choose an arbitrary small $\epsilon>0$ and set $\tau_l:=0$, $\tau_r:=1$.
Let $\tau=\tfrac1 2(\tau_l+\tau_r)$ and let $d_\tau\in I_n$ be the unique solution of
the equation $q_n(\tau,d)=0$ (property (P1)).
If $\max_{t\in[0,1]}\phi_n(t,d_\tau)>|\min_{t\in[0,1]}\phi_n(t,d_\tau)|$,
then set $\tau_l=\tau$, else set $\tau_r=\tau$ (see \cref{fig:bisec}). Repeat this procedure until
$\left|\max_{t\in[0,1]}\phi_n(t,d_\tau)+\min_{t\in[0,1]}\phi_n(t,d_\tau)\right|\leq \epsilon$.\\
This is basically a method of bisection, but since we are interested in finding an interpolant which
alternates, the procedure can be seen as a kind of very well known Remes algorithm
(\cite{Remes-1934_3},\cite{Remes-1934_1},\cite{Remes-1934_2}). Observe that the same algorithm works also
with the simplified error $\psi_n$.

\begin{figure}[h]
  \centering
  \includegraphics[width=0.5\textwidth]{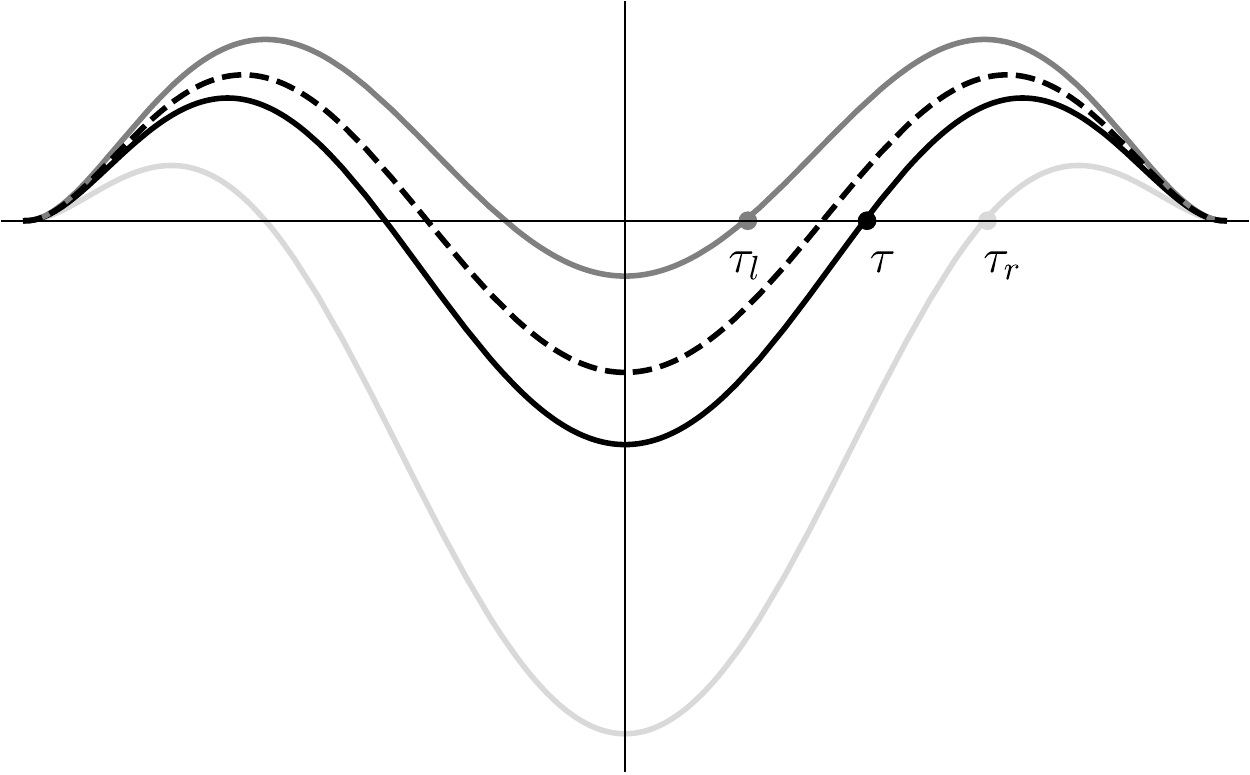}
  \caption{Graphs of $\phi_n(\cdot,d_\tau)$, $\phi_n(\cdot,d_{\tau_l})$ and
  $\phi_n(\cdot,d_{\tau_r})$. The graph of $\phi_n(\cdot,d_{\tau^*})$ is dashed.}
  \label{fig:bisec}
\end{figure}

In the following sections we shall see that the above properties of the polynomial $q_n$
can be proven at least for $n=2,3,4,5$. Consequently the existence and the construction
of the best geometric $G^{n-2}$ interpolant according to the radial error follows.

\section{Parabolic $G^0$ case revisited}\label{sec:parabolic_case_revisited}

The parabolic $G^0$ case was already considered in \cref{sec:parabolicG0direct}.
Here we use the approach described in the previous section.
Recall that the control points of the $G^0$ interpolant
are $\bfm{b}_0=(c,-s)^T$, $\bfm{b}_1=(d,0)^T$ and $\bfm{b}_2=(c,s)^T$.
The corresponding simplified error function is $\psi_2(t,d)=(t^2-1)q_2(t,d)$ and
it is easy to verify that
$$
q_2(t,d)=\frac{1}4 \left((d-c)^2t^2-(d+c)^2+4\right).
$$

\begin{lemma}
  For every $\tau\in[0,1)$ there exists the unique parameter $d_\tau\in I_2:=(1,\infty)$
  such that $q_2(\tau,d_\tau)=0$.
\end{lemma}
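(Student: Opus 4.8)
The plan is to solve the equation $q_2(\tau,d)=0$ explicitly for $d$, treating it as a quadratic in $d$, and then verify that exactly one of its two roots lies in $I_2=(1,\infty)$ for each fixed $\tau\in[0,1)$. Writing out $q_2(\tau,d)=\tfrac14\bigl((d-c)^2\tau^2-(d+c)^2+4\bigr)$, I would expand it as a quadratic polynomial in $d$ whose leading coefficient is $\tau^2-1$. The sign of this leading coefficient is crucial: since $\tau\in[0,1)$ we have $\tau^2-1<0$, so $q_2(\cdot,d)$ is (up to the positive factor $\tfrac14$) a downward-opening parabola in $d$, which immediately tells us it has at most two real roots and that it is positive between them and negative outside.

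First I would record the two boundary evaluations that pin down the root location. At $d=1$ one computes $q_2(\tau,1)=\tfrac14\bigl((1-c)^2\tau^2-(1+c)^2+4\bigr)$; using $0\le c<1$ and $\tau<1$ one checks this is strictly positive (the dominant contribution being $4-(1+c)^2=(1-c)(3+c)>0$, with the remaining term nonnegative). Next, as $d\to+\infty$ the leading term $(\tau^2-1)d^2$ with negative coefficient forces $q_2(\tau,d)\to-\infty$. Since $q_2(\tau,\cdot)$ is continuous, positive at $d=1$ and tends to $-\infty$, the intermediate value theorem yields at least one root in $(1,\infty)$.

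For uniqueness on $I_2$ I would argue that there is \emph{exactly} one such root. Because the parabola opens downward and is positive at $d=1$, the point $d=1$ lies strictly between its two real roots (or the parabola has a single larger root to the right of $1$); in either case precisely one root exceeds $1$, namely the larger root $d_\tau$, while the other root satisfies $d\le 1$ and hence lies outside $I_2$. Concretely, I would solve the quadratic and write $d_\tau$ via the quadratic formula, choosing the branch that gives the value greater than $1$, and confirm the discriminant is positive throughout $\tau\in[0,1)$ so that the roots are real and distinct. This establishes both existence and uniqueness of $d_\tau\in(1,\infty)$.

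I do not expect any serious obstacle here, since the defining relation is only quadratic in $d$; the entire argument reduces to a sign analysis of a downward-opening parabola together with one positivity check at $d=1$. The only point requiring mild care is the degenerate behaviour as $\tau\to1^-$, where the leading coefficient $\tau^2-1$ vanishes and the equation degenerates to linear in $d$; since the statement restricts to $\tau\in[0,1)$ this case is excluded, but I would still note that the root $d_\tau$ stays finite and bounded away from $1$ on any compact subinterval of $[0,1)$, which is what property (P1) ultimately needs. The remaining verifications demanded by (P1) and (P2)—the monotonicity of $q_2(0,\cdot)$ and $q_2(1,\cdot)$ and the injectivity of $q_2(0,\cdot)$ on $I_2$—follow by the same direct computation and would be handled in the subsequent lemmas.
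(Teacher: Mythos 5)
Your proposal is correct and takes essentially the same route as the paper: both arguments rest on exactly the two facts that $q_2(\tau,1)>0$ (the paper writes it as $\tfrac{1}{4}(1-c)\bigl(3+c+(1-c)\tau^2\bigr)$, which agrees with your expansion) and that the leading coefficient $-\tfrac{1}{4}(1-\tau^2)$ of the quadratic $q_2(\tau,\cdot)$ is negative, so the downward-opening parabola has exactly one root to the right of $d=1$. The paper simply states these two facts and leaves the sign analysis implicit, which you spell out in more detail.
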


\begin{proof}
The lemma follows since $q_2(\tau,1)=\tfrac{1}{4}(1-c)(3+c+(1-c)\tau^2)>0$ and the leading coefficient of the polynomial $q_2(\tau,\cdot)$ is
$-\tfrac 1 4(1-\tau^2)<0$.
\end{proof}

\begin{lemma} The function $q_2(0,\cdot)$ is strictly decreasing and $q_2(1,\cdot)$ is decreasing on $I_2$.
\end{lemma}

\begin{proof}
  The result follows directly from $q_2(0,d)=\tfrac{1}4 \left(4-(d+c)^2\right)$ and $q_2(1,d)=1-c d$.
\end{proof}
Note that the above lemmas imply the properties (P1) and (P2) and the existence and the uniqueness of the best
quadratic $G^2$ interpolant of the circular arc according to the radial error is confirmed.
\section{Cubic $G^1$ case}
The control points in this case are
\begin{equation*}
  \bfm{b}_0=(c,-s)^T,\quad \bfm{b}_1=(c,-s)^T+d\,(s,c)^T,
  \quad \bfm{b}_2=(c,s)^T+d(s,-c)^T,\quad\bfm{b}_3=(c,s)^T,\quad d>0,
\end{equation*} and the corresponding simplified error function is
$\psi_3(t,d)=(t^2-1)^2 q_3(t,d)$, where
$$
q_3(t,d)=\frac{1}{16} \left((3 d c-2s)^2t^2+(3 d s+ 4c)^2-16\right).
$$

\begin{lemma}
For every $\tau\in(0,1)$ there exists the unique parameter $d_\tau\in I_3:=(0,\infty)$ such that $q(\tau,d_\tau)=0$.
\end{lemma}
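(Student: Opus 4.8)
The plan is to mimic the argument of the revisited parabolic case: I fix $\tau\in(0,1)$, regard $q_3(\tau,\cdot)$ as a quadratic polynomial in the single variable $d$, and then locate its zeros by inspecting the sign of its leading coefficient together with its value at the left endpoint $d=0$ of $I_3$.

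First I would expand the given expression in powers of $d$. A short computation, using $c^2-1=-s^2$, yields
\begin{equation*}
  q_3(\tau,d)=\frac{1}{16}\left(9\,(c^2\tau^2+s^2)\,d^2+12\,cs\,(2-\tau^2)\,d+4\,s^2(\tau^2-4)\right).
\end{equation*}
Two sign facts are then immediate. The leading coefficient $\tfrac{9}{16}(c^2\tau^2+s^2)$ is strictly positive, since $s=\sin\varphi>0$ for every admissible $\varphi\in(0,\pi/2]$. The value at the endpoint of $I_3$ is $q_3(\tau,0)=\tfrac14 s^2(\tau^2-4)<0$, because $\tau\in(0,1)$ forces $\tau^2-4<0$.

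With these two observations the conclusion is essentially automatic. The quadratic $q_3(\tau,\cdot)$ opens upward and is negative at $d=0$, so its minimum is negative, it has two distinct real zeros, and $d=0$ lies strictly between them. Hence exactly one zero lies in $I_3=(0,\infty)$, and that zero is the required $d_\tau$.

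I expect no genuine obstacle here; the computation is routine and the only thing that needs care is that the qualitative picture is the mirror image of the parabolic case. There the leading coefficient of $q_2(\tau,\cdot)$ was negative and one used positivity at the endpoint $d=1$, so that the admissible root was the larger root of a downward parabola; here the parabola opens upward and $0$ separates its two roots, so uniqueness in $I_3$ holds for the opposite reason. I would therefore state the three sign ingredients ($s>0$, $\tau^2<4$, and $c^2-1=-s^2$) explicitly, so that it is transparent why precisely one root is positive and why the negative root is harmlessly excluded from $I_3$.
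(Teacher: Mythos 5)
Your proof is correct and follows essentially the same route as the paper: both rest on the two sign facts that the leading coefficient $\tfrac{9}{16}(s^2+c^2\tau^2)$ of the quadratic $q_3(\tau,\cdot)$ is positive and that $q_3(\tau,0)=-\tfrac14 s^2(4-\tau^2)<0$, from which the existence of exactly one root in $(0,\infty)$ is immediate. Your explicit expansion in powers of $d$ and the contrast with the parabolic case are accurate but add nothing beyond the paper's two-line argument.
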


\begin{proof}
Since $q_3(\tau,\cdot)$ has a positive leading coefficient $\tfrac{9}{16}(s^2+c^2 \tau^2)$
and $q_3(\tau,0)=-\tfrac{1}{4}s^2(4-\tau^2)<0$,
there exists the unique $d_\tau>0$ such that $q_3(\tau,d_\tau)=0$.
\end{proof}

\begin{lemma}
  The functions $q_3(0,\cdot)$ and $q_3(1,\cdot)$ are both strictly increasing functions of $d\in I_3$.
\end{lemma}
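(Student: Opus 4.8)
The plan is to evaluate $q_3(t,d)$ at the two relevant abscissae $t=0$ and $t=1$, thereby reducing each claim to a statement about an explicit function of the single variable $d$, and then to read off monotonicity from the resulting closed forms. Substituting $t=0$ annihilates the first summand in the given expression, so that
\[
q_3(0,d)=\frac{1}{16}\left((3ds+4c)^2-16\right).
\]
Substituting $t=1$ and expanding both squares, I would invoke the Pythagorean identity $c^2+s^2=1$ to collapse the quadratic coefficient, obtaining
\[
q_3(1,d)=\frac{1}{16}\left(9d^2+12cds+4s^2+16c^2-16\right).
\]

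For the first function, differentiating with respect to $d$ yields $\frac{\partial}{\partial d}\,q_3(0,d)=\tfrac{3s}{8}(3ds+4c)$. Since $s=\sin\varphi>0$ and $0\leq c<1$ under the standing assumptions on $\varphi$, this is strictly positive for every $d\in(0,\infty)$, so $q_3(0,\cdot)$ is strictly increasing on $I_3$; equivalently, $d\mapsto 3ds+4c$ is itself positive and strictly increasing there, hence so is its square. For the second function, differentiating gives $\frac{\partial}{\partial d}\,q_3(1,d)=\tfrac{1}{16}(18d+12cs)$, which is again manifestly positive for $d>0$ under the same sign conditions on $c$ and $s$. One may alternatively observe that $q_3(1,\cdot)$ is a quadratic in $d$ with positive leading coefficient $9/16$ whose vertex sits at $d=-\tfrac{2cs}{3}\leq 0$, so it increases throughout $I_3=(0,\infty)$.

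I do not anticipate any genuine obstacle in this argument: once the Pythagorean identity has simplified the $t=1$ case, each claim reduces to checking the sign of an expression that is a nonnegative combination of $d$, $c$, and $s$. The only point demanding care is to carry along the preliminary hypotheses $s>0$ and $0\leq c<1$, which are precisely what guarantee strict positivity of both derivatives on the open interval $(0,\infty)$ and hence the strict monotonicity asserted in the lemma.
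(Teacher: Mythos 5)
Your proposal is correct and follows essentially the same route as the paper: evaluate $q_3$ at $t=0$ and $t=1$ and observe that both resulting expressions are strictly increasing in $d$ on $(0,\infty)$ given $s>0$ and $0\leq c<1$. The paper merely states the two closed forms (writing $q_3(1,d)=\tfrac{3}{16}(3d^2+4scd-4s^2)$, which agrees with your expression after applying $c^2+s^2=1$ once more) and leaves the monotonicity check implicit, whereas you spell out the derivatives; the content is the same.
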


\begin{proof}
Lemma follows from $q_3(0,d)=\tfrac{1}{16}\left((3 d s+ 4c)^2-16\right)$ and
$q_3(1,d)=\tfrac{3}{16}\left(3d^2+4s c d-4s^2 \right)$.
\end{proof}

The proof of the existence and the uniqueness of the optimal cubic $G^1$ interpolant of the circular arc is
thus confirmed. An example for the inner angle $\tfrac{\pi}{2}$
together with the corresponding radial error is in \cref{fig:cubicicpi4best}.

\begin{figure}[!htb]
  \minipage{0.49\textwidth}
    \centering
    \includegraphics[width=0.8\linewidth]{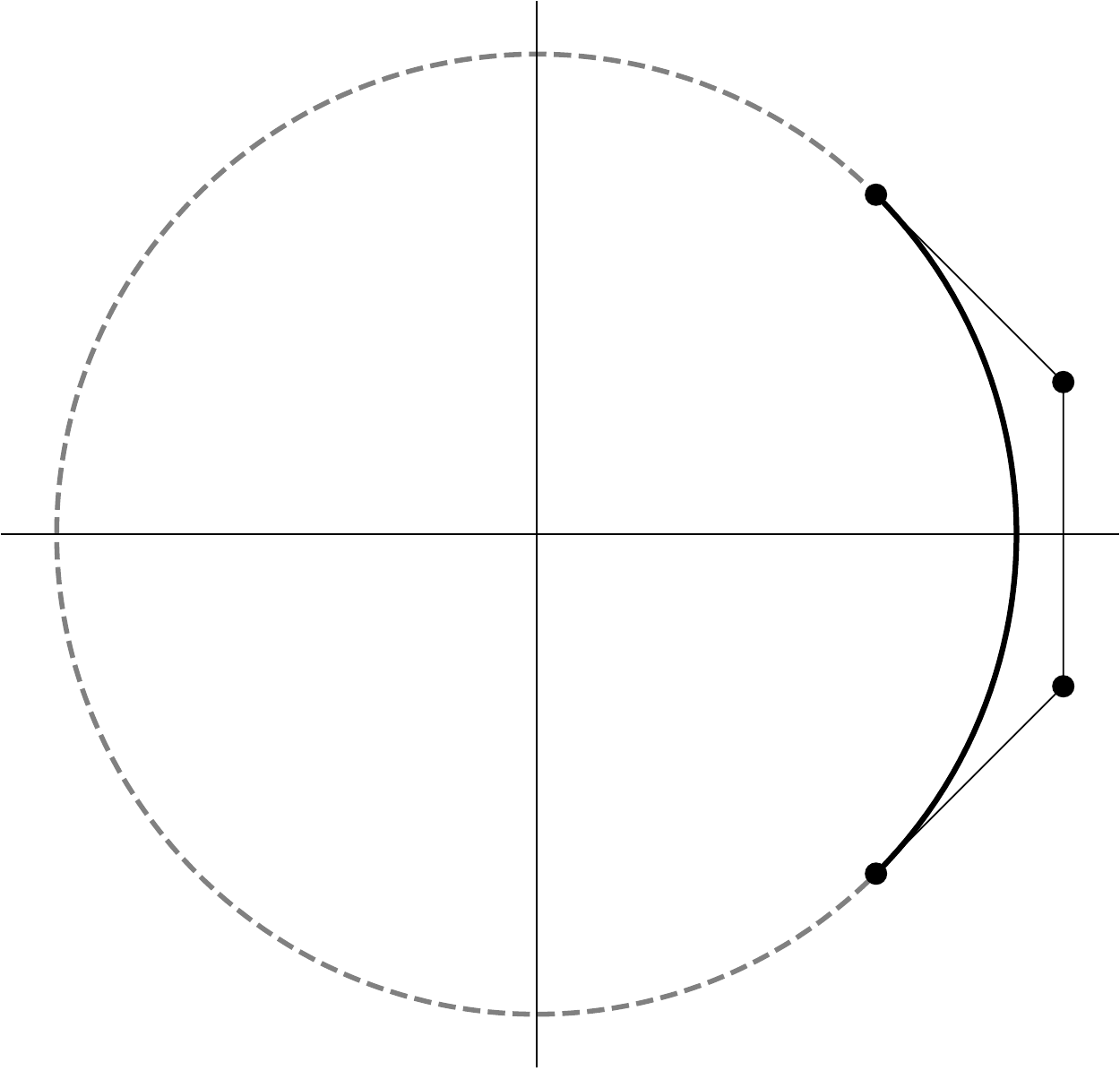}
  \endminipage\hfill
  \minipage{0.49\textwidth}
    \centering
    \includegraphics[width=0.8\linewidth]{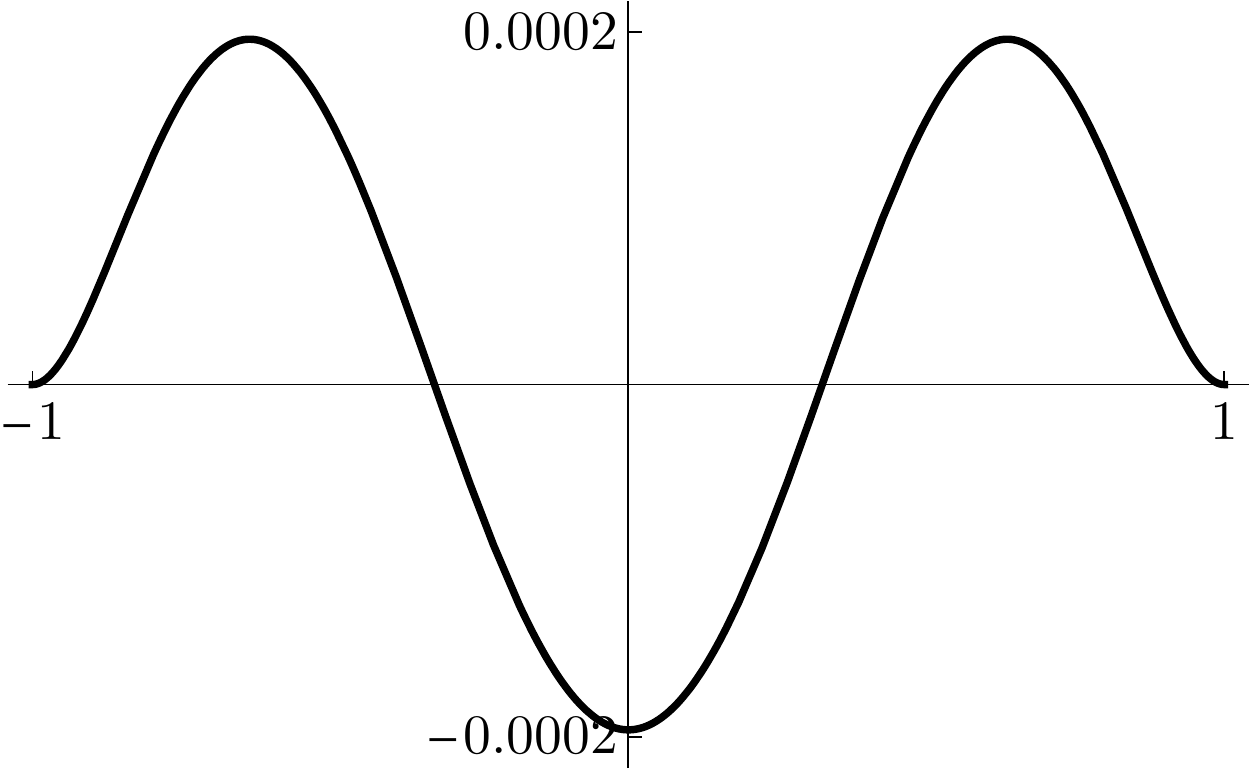}
  \endminipage\hfill
  \caption{Graphs of the best cubic $G^1$ interpolant (left, solid black)
  of the circular arc given by the inner angle $\tfrac{\pi}{2}$ (left, dashed gray) and the corresponding
  radial error (right).}
  \label{fig:cubicicpi4best}
\end{figure}

\section{Quartic $G^2$ case}
Since we are dealing with $G^2$ interpolation, the corresponding control points of the interpolant
$\bfm{p}_4$ read as
\begin{equation*}
  \bfm{b}_0=(c,-s)^T,\ \bfm{b}_1=(c,-s)^T+\frac{\sqrt{3}}{2}\sqrt{1-c\,d}\,(s,c)^T,\
  \bfm{b}_2=\left(d,0\right)^T,\
  \bfm{b}_3=(c,s)^T+\frac{\sqrt{3}}{2}\sqrt{1-c\,d}\,(s,-c)^T,\ \bfm{b}_4=(c,s)^T,
\end{equation*}
$d\in\RR$, and the simplified error function is $\psi_4(t,d)=(t^2-1)^3 q_4(t,d)$, where
\begin{align}
  q_4(t,d)&=\frac{1}{64}
  \left(
  \left(12-3 c^2-30 c d+12 c^3 d+9 d^2+12 \sqrt{3} c s \sqrt{1-c d}
  -12 \sqrt{3} s d \sqrt{1-c d}\right)t^2\right.
  \nonumber\\
  &\left.+52-13 c^2-12 c^3 d-9 d^2-12 \sqrt{3} s d \sqrt{1-c d}
  -2 c \left(9 d+10 \sqrt{3} s \sqrt{1-c d}\right)
  \right).\label{eq:q4}
\end{align}
Note that due to the square root involved we must consider the restriction of the unknown parameter
$d<1/c$. Let us define $a_4:=\frac{\sqrt{3}}{3} s \left(c^2+\sqrt{3} (1-c)\right)$ and
$b_4:=\frac{\sqrt{3}}{3} s\left(\sqrt{3+c^2}-c\right)$. Observe that $0<a_4<b_4<1$ and define
$J_4:=\left(a_4,b_4\right)$.
If $c\neq 0$ then let $I_4:=\left(\frac{1-b_4^2}{c},\frac{1-a_4^2}{c}\right)$
else $I_4:=\left(\tfrac{2}{\sqrt{3}},2\right)$ which can be obtained by taking the limit $c\to 0$.
Observe that $q_4(t,\cdot)$ is an irrational function for $0<c<1$ but with the new variable $x=\sqrt{1-cd}$
the function $r_4(t,x):=q_4(t,\frac{1-x^2}{c})$ becomes a polynomial.
Some of its obvious properties are given by the following lemma.

\begin{lemma}\label{lem:properties_r4}
  For every $t\in(0,1)$ the function $r_4(t,\cdot)$
  \begin{itemize}
    \item[i)] is a quartic polynomial.
    \item[ii)] is increasing on $J_4$ if and only if $q_4(t,\cdot)$ is decreasing on $I_4$.
    \item[iii)] has always four zeros, precisely one on each of the intervals
    $(-\infty,-1)$, $(-1,a_4)$, $J_4$ and $(1,\infty)$.
  \end{itemize}
\end{lemma}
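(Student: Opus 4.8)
The plan is to dispatch i) and ii) by direct inspection and to locate the real content in iii).

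For i) I would substitute $d=\tfrac{1-x^2}{c}$ into \eqref{eq:q4}, replacing every factor $\sqrt{1-cd}$ by $x$ and every $d$ by $\tfrac{1-x^2}{c}$; each summand becomes a polynomial in $x$, so $r_4(t,\cdot)$ is a polynomial. The only degree-four contributions are the two terms $\pm 9d^2=\pm\tfrac{9}{c^2}(1-x^2)^2$ (one in the coefficient of $t^2$, one in the constant term of \eqref{eq:q4}); collecting them shows that the coefficient of $x^4$ in $64\,r_4(t,x)$ is $\tfrac{9}{c^2}(t^2-1)$, which is nonzero, and in fact strictly negative, for every $t\in(0,1)$. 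Hence $r_4(t,\cdot)$ is exactly a quartic with negative leading coefficient. For ii) note that $d\mapsto x=\sqrt{1-cd}$ is a strictly decreasing bijection of $I_4$ onto $J_4$: its values at the endpoints are $\tfrac{1-b_4^2}{c}\mapsto b_4$ and $\tfrac{1-a_4^2}{c}\mapsto a_4$. Since $\tfrac{dx}{dd}<0$, the chain rule $\partial_d q_4=\partial_x r_4\cdot\tfrac{dx}{dd}$ reverses the sense of monotonicity, so $r_4(t,\cdot)$ is increasing on $J_4$ if and only if $q_4(t,\cdot)$ is decreasing on $I_4$.

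The heart of iii) is a structural observation about the coefficient of $t^2$. Writing $64\,r_4(t,x)=\tilde A(x)\,t^2+\tilde C(x)$, I expect to verify by expansion that this coefficient is a perfect square,
\begin{equation*}
  \tilde A(x)=\frac{1}{c^2}\bigl(3x^2+2\sqrt{3}\,s\,c\,x-3s^2\bigr)^2\geq 0,
\end{equation*}
and that $b_4$ is precisely the positive root of $3x^2+2\sqrt{3}\,s\,c\,x-3s^2$, hence a double zero of $\tilde A$. It follows that for each fixed $x$ the map $t\mapsto r_4(t,x)$ is nondecreasing in $t^2$, so $r_4(0,x)\leq r_4(t,x)\leq r_4(1,x)$ on $[0,1]$; this collapses every sign statement to a single endpoint value in $t$.

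I would then read off the signs at $-1,a_4,b_4,1$. At $x=\pm1$ (that is $d=0$) the minimum over $t$ is $r_4(0,\pm1)=\tfrac{1}{64}\bigl(52-13c^2\mp20\sqrt{3}\,cs\bigr)$; the value at $x=-1$ is manifestly positive and the value at $x=1$ is positive because $20\sqrt{3}\,cs\leq10\sqrt{3}(c^2+s^2)=10\sqrt{3}<52-13c^2$, so $r_4(t,\pm1)>0$. At $x=b_4$ we have $\tilde A(b_4)=0$, hence $r_4(t,b_4)=r_4(0,b_4)$ is constant in $t$, and I would check $r_4(0,b_4)>0$. At $x=a_4$ the maximum over $t$ is $r_4(1,a_4)$, and I would check $r_4(1,a_4)<0$, so that $r_4(t,a_4)\leq r_4(1,a_4)<0$ for all $t$. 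The sign pattern of $r_4(t,\cdot)$ along $-\infty,-1,a_4,b_4,1,+\infty$ is therefore $-,+,-,+,+,-$, the outer signs coming from the negative leading coefficient of i). The intermediate value theorem yields a zero in each of $(-\infty,-1)$, $(-1,a_4)$, $J_4=(a_4,b_4)$ and $(1,\infty)$; since $r_4(t,\cdot)$ is a quartic these four simple zeros exhaust its roots, which is exactly iii).

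The main obstacle is this final paragraph, reduced by the perfect-square identity to three scalar inequalities in the single variable $c\in(0,1)$ with $s=\sqrt{1-c^2}$: namely $52-13c^2-20\sqrt{3}\,cs>0$, $r_4(0,b_4)>0$ and $r_4(1,a_4)<0$. Using the defining relation $3b_4^2+2\sqrt{3}\,sc\,b_4-3s^2=0$ to eliminate higher powers of $b_4$, together with the explicit form of $a_4$, each of these should collapse to the positivity or negativity of a concrete expression; as in the proof of \cref{lem:fG0}, I would finish by rewriting each as a manifestly signed combination of nonnegative terms. Establishing the perfect-square identity for $\tilde A$ and confirming that $r_4(1,a_4)$ is strictly negative (it does not vanish at $a_4$) are the steps demanding the most care.
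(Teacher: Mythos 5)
Your proposal is correct, and for part (iii) it takes a genuinely different route from the paper. I verified your key identity: writing $64\,r_4(t,x)=\tilde A(x)t^2+\tilde C(x)$ and expanding $\frac{1}{c^2}\bigl(3x^2+2\sqrt{3}\,scx-3s^2\bigr)^2$ with $s^2=1-c^2$ does reproduce $\tilde A(x)$ exactly, and the positive root of $3x^2+2\sqrt{3}\,scx-3s^2$ is indeed $b_4$. The paper instead evaluates $r_4(t,\cdot)$ at $-1$, $b_4$, $1$, $a_4$ directly as functions of $t$: the first three are shown positive via explicit expressions (notably $r_4(t,b_4)=\frac14(1-c^2)\bigl(c\sqrt{3+c^2}-1-c^2\bigr)^2$, which comes out independent of $t$ --- an unremarked symptom of your identity $\tilde A(b_4)=0$), and at $a_4$ it factors $r_4(t,a_4)=-\frac{1}{64}(1-c)^3(1+c)s_4(t)$ with $s_4$ an even quadratic whose positivity on $[0,1]$ follows from $s_4(0)>0$ and $s_4(1)>0$. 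Your route proves the perfect square first, so each sign check collapses to a single scalar inequality in $c$; all three deferred inequalities are true --- your bound $20\sqrt{3}\,cs\le 10\sqrt{3}<39\le 52-13c^2$ is fine, $r_4(0,b_4)$ equals the paper's strictly positive expression (strict since $c\sqrt{3+c^2}=1+c^2$ forces $c=1$), and $r_4(1,a_4)<0$ is exactly the paper's $s_4(1)>0$. What your approach buys: the factorization of $\tilde A$ is not wasted work, since the paper needs precisely this fact later in \cref{lem:lc_quartic} (there $lc=\tilde A/64$ is asserted nonnegative with double zeros $b_4^-$ and $b_4$), so one computation does double duty and bivariate sign estimates in $(t,c)$ become univariate ones in $c$. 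What the paper's route buys: no factorization is needed at this stage, and the $t$-dependence is tame (even quadratics) anyway. The endgame --- sign pattern $-,+,-,+,+,-$, the intermediate value theorem, and the count of four roots of a quartic --- is identical in both; and like the paper you implicitly assume $0<c<1$ (the substitution $d=\frac{1-x^2}{c}$ fails at $c=0$), which is consistent since the paper handles $c=0$ separately in \cref{lem:P1_quartic}.
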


\begin{proof}
  The first two properties are obvious. To see the third one (see \cref{fig:r4_general}),
  first observe that the leading coefficient
  of $r_4(t,\cdot)$ is $lc(t)=-\frac{9}{64c^2}(1-t^2)$ and it is obviously negative.
  \begin{figure}[h]
  \centering
  \includegraphics[width=0.5\textwidth]{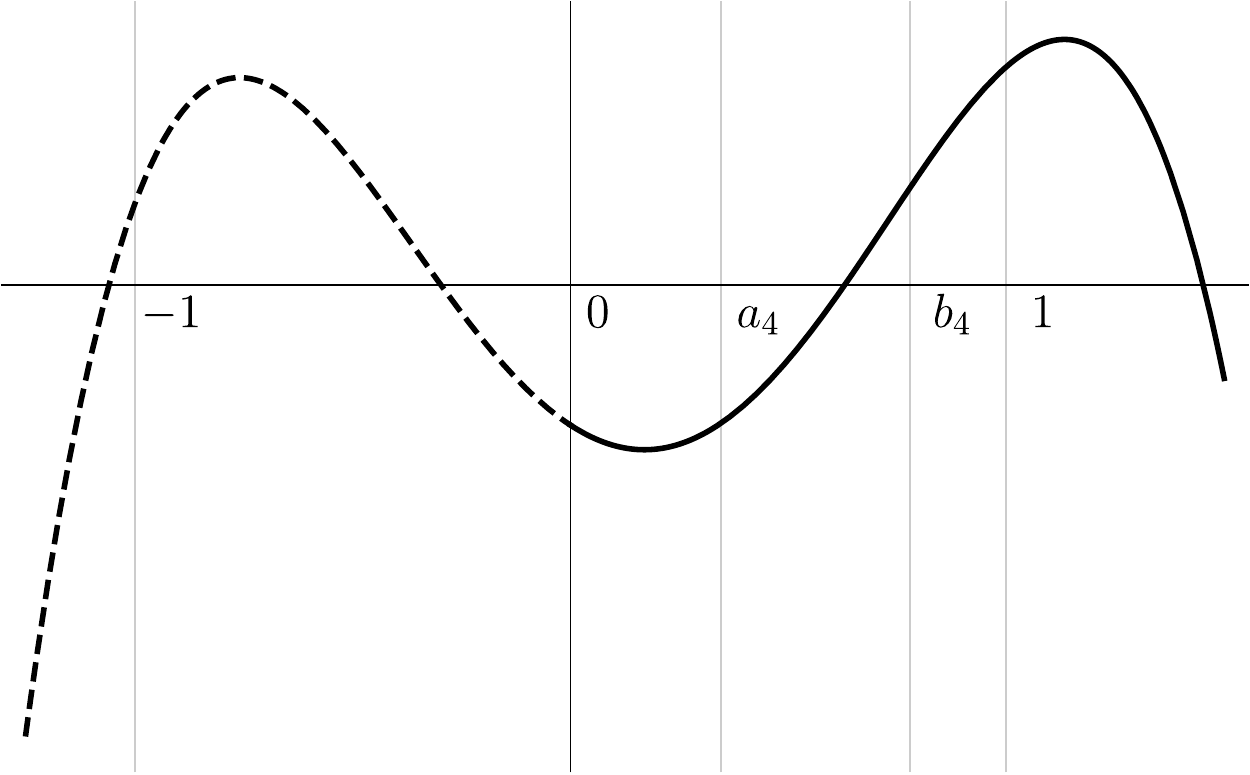}
  \caption{A characteristic graph of $r_4(t,\cdot)$. }
  \label{fig:r4_general}
\end{figure}
  A straightforward calculation reveals that
  \begin{align*}
    r_4(t,-1)&=\frac{1}{64} \left(39+8 \sqrt{3} c s+9 t^2+12 \sqrt{3} c s
    \left(1-t^2\right)+\left(1-c^2\right) \left(13+3t^2\right)\right)>0,\\
    r_4(t,b_4)&=\frac{1}{4} \left(1-c^2\right) \left(c \sqrt{3+c^2}-1-c^2\right)^2>0,\\
    r_4(t,1)&=\frac{1}{64} \left(52+12 t^2
   +4 \sqrt{3} c s \left(-5+3 t^2\right)
   -c^2 \left(13+3 t^2\right)\right)\geq
   \frac{1}{64} \left(52-20\sqrt{3}-16\right)>0.
  \end{align*}
Since $lc(t)<0$ the property (iii) will follow if we verify $r_4(t,a_4)<0$.
In order to do this let us first observe that
\begin{equation}
  r_4(t,a_4)=-\frac{1}{64}(1-c)^3(1+c)s_4(t),\label{r4s4}
\end{equation}
where $s_4$ is an even quadratic polynomial. It is also quite easy to verify
\begin{align*}
  s_4(0)&=\left(4-2 \left(1+\sqrt{3}\right) c^2\right)^2+12 \left(2+\sqrt{3}\right) c^3 (1-c)
   +20\left(\sqrt{3}-1\right) c
   \left(1-c^3\right)\\
   &+\left(4 \left(7-4 \sqrt{3}\right)+c^2\right) \left(1-c^4\right)+20 \left(2 \sqrt{3}-3\right)+2
   \left(5-2 \sqrt{3}\right) c^3+2 \left(1+2 \sqrt{3}\right) c^5>0,\\
  s_4(1)&=2 \left(\left(7-4 \sqrt{3}\right) \left(64+40 \sqrt{3}-7 c\right)+c \left(3-2 \sqrt{3} c\right)^2+c^3+c^3 \left(3\sqrt{3}-2-2 c\right)^2\right)>0,
\end{align*}
and the proof of the lemma is complete.
\end{proof}

\begin{lemma}\label{lem:P1_quartic}
For every $\tau\in(0,1)$ there exists the unique parameter
$d_\tau\in I_4$ such that $q_4(\tau,d_\tau)=0$.
\end{lemma}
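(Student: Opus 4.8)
The plan is to deduce this existence-and-uniqueness statement almost directly from Lemma~\ref{lem:properties_r4}, by reducing the irrational equation $q_4(\tau,d)=0$ to the polynomial equation $r_4(\tau,x)=0$ through the substitution already introduced, $x=\sqrt{1-cd}$. I would first dispatch the generic case $c\in(0,1)$ and treat the degenerate case $c=0$ separately at the end, since there the substitution collapses.

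For $c\in(0,1)$, fix $\tau\in(0,1)$. The key observation is that $d\mapsto x=\sqrt{1-cd}$ is a continuous, strictly decreasing bijection from $I_4=\left(\frac{1-b_4^2}{c},\frac{1-a_4^2}{c}\right)$ onto $J_4=(a_4,b_4)$, with inverse $x\mapsto d=\frac{1-x^2}{c}$. I would verify this by checking the endpoints: since $a_4,b_4>0$, the right endpoint satisfies $\sqrt{1-c\cdot\frac{1-a_4^2}{c}}=\sqrt{a_4^2}=a_4$ and similarly the left endpoint maps to $b_4$, while strict monotonicity is immediate because $c>0$. By the very definition $r_4(\tau,x)=q_4\!\left(\tau,\frac{1-x^2}{c}\right)$, this correspondence identifies the zeros of $q_4(\tau,\cdot)$ lying in $I_4$ with the zeros of $r_4(\tau,\cdot)$ lying in $J_4$.

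Now I would simply invoke Lemma~\ref{lem:properties_r4}(iii), which guarantees that $r_4(\tau,\cdot)$ has exactly one zero $x_\tau$ in $J_4$. Transporting it back through the bijection produces a single $d_\tau=\frac{1-x_\tau^2}{c}\in I_4$ with $q_4(\tau,d_\tau)=r_4(\tau,x_\tau)=0$, and uniqueness on $I_4$ is inherited from the uniqueness of $x_\tau$ on $J_4$. Because the genuinely hard combinatorial work—locating the four real roots of $r_4(\tau,\cdot)$ and pinning exactly one of them to $J_4$—has already been carried out in Lemma~\ref{lem:properties_r4}, essentially nothing remains in the case $c\neq 0$ beyond the bookkeeping of the change of variables.

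The only new point is the degenerate case $c=0$, where $x\equiv 1$ and $I_4$ must be read as the limiting interval $\left(\frac{2}{\sqrt3},2\right)$. Here I would argue directly: specializing \eqref{eq:q4} at $c=0$, $s=1$, the equation $q_4(\tau,d)=0$ becomes a quadratic in $d$ with leading coefficient $\frac{9}{64}(\tau^2-1)<0$, hence a downward-opening parabola. I expect $q_4(\tau,\tfrac{2}{\sqrt3})=\tfrac14>0$ (in fact independent of $\tau$) and $q_4(\tau,2)<0$ for all $\tau\in(0,1)$, so the intermediate value theorem yields a root in $\left(\frac{2}{\sqrt3},2\right)$; uniqueness then follows because the positivity at $\tfrac{2}{\sqrt3}$ places it strictly between the two roots while the negativity at $2$ forces the second root to lie to the left of $\tfrac{2}{\sqrt3}$, hence outside $I_4$. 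The only mildly delicate step is confirming these two endpoint sign conditions, but they reduce to elementary inequalities in $\tau$.
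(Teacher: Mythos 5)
Your proof is correct and takes essentially the same route as the paper's: for $0<c<1$ the substitution $x=\sqrt{1-cd}$ identifies zeros of $q_4(\tau,\cdot)$ on $I_4$ with zeros of $r_4(\tau,\cdot)$ on $J_4$, so the claim follows from Lemma~\ref{lem:properties_r4}(iii), while for $c=0$ a sign check of the quadratic $q_4(\tau,\cdot)$ at the endpoints of $I_4=\left(\tfrac{2}{\sqrt{3}},2\right)$ gives existence and uniqueness. Your separate endpoint evaluations, $q_4\left(\tau,\tfrac{2}{\sqrt{3}}\right)=\tfrac14>0$ and $q_4(\tau,2)<0$ (both of which are correct), are exactly what the paper packages into the single product inequality $q_4\left(\tau,\tfrac{2}{\sqrt{3}}\right)q_4(\tau,2)<0$.
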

\begin{proof}
  If $0<c<1$, the result follows directly from the previous lemma.
  If $c=0$ then $I_4=\left(\frac{2}{\sqrt{3}},2\right)$, $q_4(\tau,\cdot)$ is a quadratic polynomial
with $q_4(\tau,\frac{2}{\sqrt{3}})q_4(\tau,2)=
-\frac{1}{32} \left(6 \sqrt{3}-8+3 \left(2-\sqrt{3}\right) \left(1-\tau^2\right)\right)<0$
and the result of the lemma follows.
\end{proof}
Since $q_4(t,d)=r_4(t,\sqrt{1-cd})$, each nonnegative zero of $r_4(t,\cdot)$ implies
one zero of $q_4(t,\cdot)$. Thus by \cref{lem:properties_r4} $q_4(t,\cdot)$ might have
two or three zeros. But the next lemma reveals that the one on $I_4$ provides the best approximant.
\begin{lemma}\label{lem:lc_quartic}
  Let $d_\tau^*$ be the unique solution of $q_4(\tau,d)=0$ on $I_4$. If $d_\tau$ is
  any other solution of the same equation then the absolute value of the
  leading coefficient of $q_4(\cdot,d_\tau)$ is bigger than the absolute value of the leading
  coefficient of $q_4(\cdot,d_\tau^*)$.
\end{lemma}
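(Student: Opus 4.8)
The plan is to read the lemma as the statement that, among all solutions $d$ of $q_4(\tau,d)=0$ for a fixed central zero $\tau$, the one lying in $I_4$ produces the smallest error, and that the correct quantity measuring this is exactly the leading coefficient. First I would record this reduction. Since $q_4(\cdot,d)$ is an even quadratic in $t$ vanishing at $\tau$, I can write $q_4(t,d)=A(d)\,(t^2-\tau^2)$, where $A(d)$ is precisely its leading coefficient, so the error factors as
\[
  \psi_4(t,d)=A(d)\,(t^2-1)^3(t^2-\tau^2).
\]
The shape $(t^2-1)^3(t^2-\tau^2)$ depends only on $\tau$, hence for two solutions sharing the same $\tau$ the sup-norms of $\psi_4$ and of $\phi_4=\sqrt{\psi_4+1}-1$ are both strictly increasing functions of $|A(d)|$ (this holds irrespective of the sign of $A$). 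Thus the lemma is exactly the claim that the $I_4$-solution minimizes $|A(d)|$, and together with \cref{lem:P1_quartic} it establishes property~(P1).

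Next I would make $A$ explicit in the polynomial variable $x=\sqrt{1-cd}$. Writing $\tilde A(x):=A\!\left((1-x^2)/c\right)$ for the $t^2$-coefficient of $r_4(t,x)$, the crux of the whole argument is the identity
\[
  \tilde A(x)=\frac{1}{64c^2}\bigl(3x^2+2\sqrt3\,c\,s\,x-3s^2\bigr)^2
          =\frac{9}{64c^2}\,(x-b_4)^2(x-x_-)^2,
\]
where $b_4=\tfrac{\sqrt3}{3}s\bigl(\sqrt{3+c^2}-c\bigr)$ is exactly the right endpoint of $J_4$ and $x_-=-\tfrac{\sqrt3}{3}s\bigl(\sqrt{3+c^2}+c\bigr)<0$. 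I would obtain this by substituting $d=(1-x^2)/c$ into the $t^2$-coefficient of \eqref{eq:q4} and recognizing the resulting quartic in $x$ as a perfect square. The two consequences I need are that $A(d)=\tilde A(x)\ge0$ and that $|A(d)|$ vanishes on $[0,\infty)$ only at $x=b_4$.

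With this identity the comparison becomes elementary. The polynomial $L(x):=3x^2+2\sqrt3\,c\,s\,x-3s^2$ has its vertex at a negative abscissa, so it is increasing on $[0,\infty)$ with the single zero $b_4$; consequently $x\mapsto\tilde A(x)=L(x)^2/(64c^2)$ is strictly decreasing on $[0,b_4]$ and strictly increasing on $[b_4,\infty)$, with minimum $0$ at $b_4$. The admissible solutions are the nonnegative zeros of $r_4(\tau,\cdot)$, which by \cref{lem:properties_r4} satisfy $x_2<a_4<x_3<b_4<1<x_4$, the $I_4$-solution being $x_3\in J_4$. If the zero $x_2\in(-1,a_4)$ is nonnegative it lies in $[0,x_3)\subset[0,b_4]$, so $\tilde A(x_2)>\tilde A(x_3)$ by monotonicity on the decreasing branch. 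For the zero $x_4>1>b_4$ I would use $\tilde A(x_3)<\tilde A(a_4)$ (decreasing branch, $x_3>a_4$) together with $\tilde A(x_4)>\tilde A(1)$ (increasing branch, $x_4>1$), so that it suffices to check $\tilde A(1)\ge\tilde A(a_4)$.

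The one genuinely computational point, and the step I expect to be the main obstacle, is this last inequality $\tilde A(1)\ge\tilde A(a_4)$, equivalently $|L(1)|\ge|L(a_4)|$. Writing $L(x)=3(x-b_4)(x-x_-)$ it reduces to $(1-b_4)(1-x_-)\ge(b_4-a_4)(a_4-x_-)$, and since $1-x_->a_4-x_->0$ it is enough to prove the clean inequality $1+a_4\ge 2b_4$ on $(0,1)$; after isolating the square root appearing in $b_4$ this is a routine polynomial inequality in $c$, with equality only in the degenerate limit $c\to0$. The value $c=0$, where the substitution $x=\sqrt{1-cd}$ is unavailable and $q_4(\tau,\cdot)$ is genuinely quadratic, is treated separately exactly as in \cref{lem:P1_quartic}. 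Once the perfect-square identity is in place the remainder is the elementary geometry of a single parabola, so I expect discovering and verifying that identity to be where essentially all the content lies.
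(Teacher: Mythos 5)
Your argument is correct, and its skeleton coincides with the paper's own proof: the same substitution $x=\sqrt{1-cd}$, the same observation that the leading coefficient of $r_4(\cdot,x)$ is nonnegative with zeros only at $b_4^{-}$ and $b_4$ (hence decreasing on $[0,b_4)$, increasing on $(b_4,\infty)$), the same case split ($x_\tau<a_4$ handled by monotonicity; $x_\tau>b_4$ forcing $x_\tau>1$ by \cref{lem:properties_r4}), and the same final reduction to $lc(1)\geq lc(a_4)$. Where you genuinely depart from the paper is in how that last inequality is established, and your route is cleaner. The paper never writes the leading coefficient as a square; it states that the quartic $lc$ has two double zeros and then proves $lc(1)-lc(a_4)>0$ by expanding it into a long, manifestly positive sum of terms in $c$ --- this brute-force expansion is the bulk of its proof. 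You instead factor $L(x)=3x^2+2\sqrt{3}\,c\,s\,x-3s^2=3(x-b_4)(x-x_-)$ (your perfect-square identity is exactly the paper's double-zero statement, and it checks out, since $b_4x_-=-s^2$ and $b_4+x_-=-2cs/\sqrt{3}$), so that $|L(1)|\geq|L(a_4)|$ reduces to $(1-b_4)(1-x_-)\geq(b_4-a_4)(a_4-x_-)$ and then, because $1-x_->a_4-x_->0$, to the scalar inequality $1+a_4\geq 2b_4$. That inequality does hold for $c\in[0,1)$ with equality only at $c=0$: from $\sqrt{3+c^2}\leq\sqrt{3}+\tfrac{c^2}{2\sqrt{3}}$ one gets
\begin{equation*}
  1+a_4-2b_4\;\geq\;(1-s)+\left(\tfrac{1}{\sqrt{3}}-\tfrac{1}{3}\right)s\,c^2
  +\left(\tfrac{2}{\sqrt{3}}-1\right)s\,c\;\geq\;0,
\end{equation*}
so the step you flagged as routine is indeed routine, and your proof closes with strict inequality in the lemma because $\tilde A$ is strictly monotone on each branch and $x_3>a_4$, $x_4>1$. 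One minor repair: deferring $c=0$ to ``exactly as in \cref{lem:P1_quartic}'' is not quite the right pointer, since that lemma only gives existence of the root in $I_4$; the needed comparison is the paper's short separate argument that for $c=0$ the other root of $q_4(\tau,\cdot)$ is negative and $lc(d)=\tfrac{3}{64}(2-\sqrt{3}d)^2$ decreases up to $2/\sqrt{3}$ and increases afterwards, whence $lc(d_\tau)>lc(0)>lc(2)>lc(d_\tau^*)$.
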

\begin{proof}
  If $c=0$ then $I_4=\left(\frac{2}{\sqrt{3}},2\right)$ and $q_4(\tau,\cdot)$ is a quadratic polynomial
  with the leading coefficient $-\tfrac{9}{64}(1-\tau^2)<0$. Furthermore,
  $q_4(\tau,0)=\tfrac{1}{16}\left(13-3\tau^2\right)>0$ thus the additional solution $d_\tau$ of
  $q_4(\tau,d)=0$ is negative. Since the leading coefficient
  $lc(d)=\frac{3}{64} \left(2-\sqrt{3}d\right)^2$ of $q_4(\cdot,d)$ is
  nonnegative, increasing on $(\tfrac{2}{\sqrt{3}},\infty)$ and decreasing elsewhere, we have
  $lc(d_\tau)>lc(0)=\tfrac{3}{16}>\tfrac{3}{8}(2-\sqrt{3})=lc(2)>lc(d_\tau^*)$.\\
  Let us assume now that $0<c<1$. Denote by $x_\tau^*=\sqrt{1-cd_\tau^*}$ the unique solution
  of $r_4(\tau,x)=0$ on $J_4$ and by $x_\tau=\sqrt{1-cd_\tau}$ any other solution of the same equation.
  The leading coefficient $lc$ of the polynomial $r_4(\cdot,x)$ is
  $$
    lc(x)=\frac{1}{64c^2}
    \left(9 x^4+12 \sqrt{3} c s x^3-6 \left(3-5 c^2+2 c^4\right) x^2-12 \sqrt{3} c s^3 x+9 s^4\right).
  $$
 It can be verified that $lc\geq 0$
 with two double zeros $b_4^{-}=-\frac{\sqrt{3}}{3}s(\sqrt{3+c^2}+c)$ and $b_4$, and
 a unique local maximum at $-\tfrac{cs}{\sqrt{3}}$
 (see \cref{fig:lc_general}). Thus $lc$ is decreasing on $[0,b_4)$ and increasing
 on $(b_4,\infty)$.

 \begin{figure}[h]
  \centering
  \includegraphics[width=0.5\textwidth]{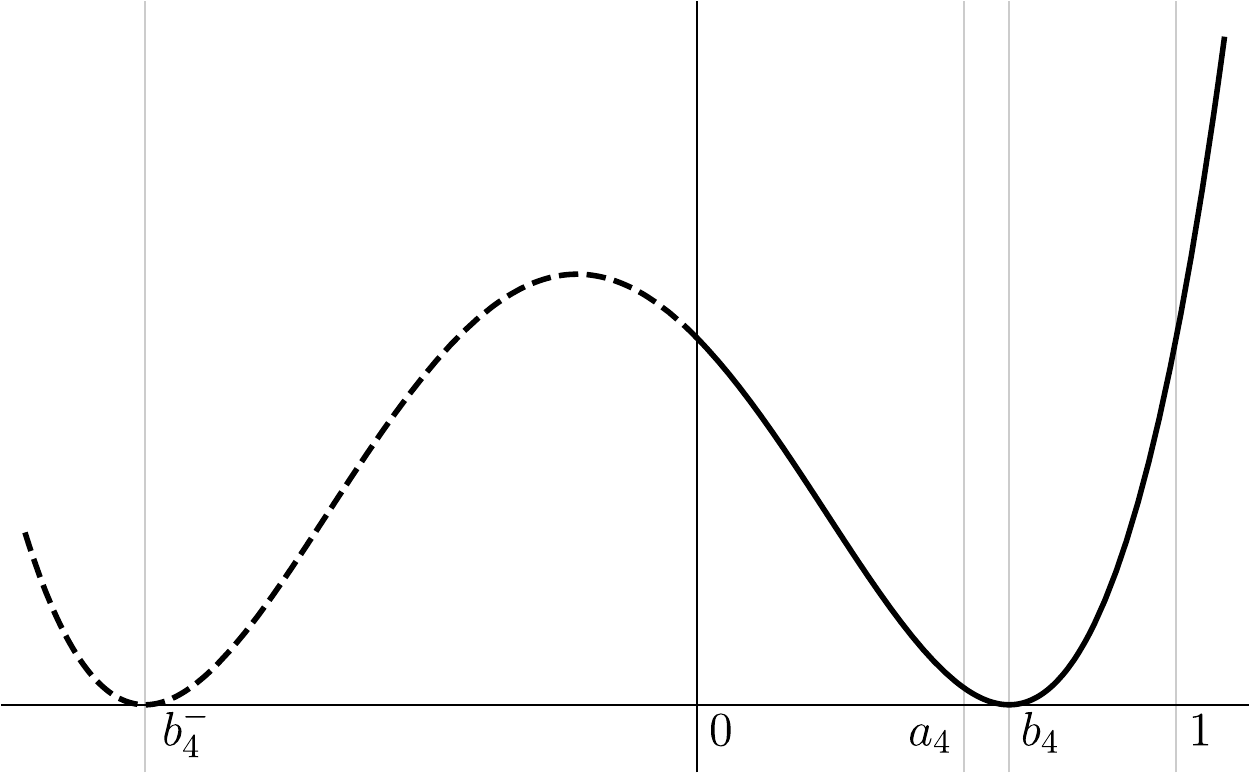}
  \caption{A characteristic graph of the leading coefficient of $r_4(\cdot,d)$.
  It is non-negative with the double zeros at
  $b_4^{-}$ and at $b_4$.}
  \label{fig:lc_general}
\end{figure}

 \noindent The proof of the lemma will be complete if we show $lc(x_\tau^*)<lc(x_\tau)$.
 If $x_\tau<a_4$ then the result follows from the fact that $lc$ decreases on $[0,b_4)$.
 If $x_\tau>b_4$, then $x_\tau>1$ by \cref{lem:properties_r4} and it is enough to
 verify that $lc(1)\geq lc(a_4)$. Because
 \begin{align*}
   lc(1)-lc(a_4)&=
   c \left(4 \sqrt{3}-9+4 c^2\right)^2+12 \sqrt{3} c \sqrt{1-c^2}+\left(24 \sqrt{3}-36\right) (1-c)
   +3 \left(28\sqrt{3}-43\right) c (1-c)\\
   &+\left(3+4 \sqrt{3}\right) c^2 (1-c)+\left(3+4 \sqrt{3}\right) c^3 (1-c)+5 \left(20
   \sqrt{3}-29\right) c^4 (1-c)+\left(76 \sqrt{3}-129\right) c^5 (1-c)\\
   &+\left(28 \sqrt{3}-47\right) c^6 (1-c)+\left(28
   \sqrt{3}-39\right) c^7 (1-c)+\left(36 \sqrt{3}-59\right) c^8 (1-c)+\left(40 \sqrt{3}-63\right) c^9 (1-c)\\
   &+8 \left(5
   \sqrt{3}-8\right) c^{10}>0,
 \end{align*}
 the proof of the lemma is complete.
 \end{proof}
From the last two lemmas the property (P1) follows. In order to confirm also the property (P2),
we additionally have to prove the following lemma.

\begin{lemma}\label{lem:P2_quartic}
  The functions $q_4(0,\cdot)$ and $q_4(1,\cdot)$ are strictly decreasing on $I_4$.
\end{lemma}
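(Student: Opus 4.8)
\section*{Proof proposal}

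The plan is to eliminate the square root through the substitution already used in the preceding lemmas, $x=\sqrt{1-cd}$, which for $0<c<1$ is a strictly decreasing bijection of $I_4$ onto $J_4=(a_4,b_4)$. Since the order-reversing change of variable underlying part (ii) of \cref{lem:properties_r4} applies equally well at $t=0$ and $t=1$, the assertion that $q_4(0,\cdot)$ and $q_4(1,\cdot)$ are strictly decreasing on $I_4$ is equivalent to the assertion that $r_4(0,\cdot)$ and $r_4(1,\cdot)$ are strictly increasing on $J_4$. I would therefore first substitute $d=(1-x^2)/c$ into \eqref{eq:q4} to obtain $r_4(0,x)$, a quartic in $x$ with leading coefficient $-9/(64c^2)$ in agreement with $lc(0)$, and $r_4(1,x)$, which is only cubic because $lc(1)=0$, and then analyse the sign of $\partial_x r_4(0,x)$ and $\partial_x r_4(1,x)$ on $J_4\subset(0,\infty)$.

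The case $t=1$ is the easy one. Here $\partial_x r_4(1,x)$ is an upward-opening quadratic whose constant term is negative, so it has exactly one positive root and is positive to the right of it. As $J_4\subset(0,\infty)$, strict monotonicity on $J_4$ follows once one verifies the single inequality $\partial_x r_4(1,a_4)\ge 0$; because $a_4$ involves only $\sqrt 3$, this rationalizes to an ordinary polynomial inequality in $c$ (with $s=\sqrt{1-c^2}$), which I expect to hold on $[0,1)$ by the manifest-positivity argument used for $s_4(0)$ and $s_4(1)$ in \cref{lem:properties_r4}.

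The case $t=0$ is the crux, and one structural observation renders it tractable. The function $\partial_x r_4(0,\cdot)$ is a cubic whose own derivative $\partial_x^2 r_4(0,\cdot)$ is a downward-opening quadratic that is positive at $x=0$, hence possesses a single positive root. Consequently $\partial_x r_4(0,\cdot)$ first increases and then decreases on $(0,\infty)$, so it attains its minimum over any closed subinterval at an endpoint. This reduces positivity of $\partial_x r_4(0,\cdot)$ on $J_4$ to the two endpoint inequalities $\partial_x r_4(0,a_4)\ge 0$ and $\partial_x r_4(0,b_4)\ge 0$, with no control of the interior required, and the strictness needed on the open interval $J_4$ is then automatic since $\partial_x r_4(0,\cdot)$ is a nonconstant cubic.

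The main obstacle is the verification of these endpoint inequalities, and in particular the one at $b_4=\tfrac{\sqrt3}{3}s(\sqrt{3+c^2}-c)$, whose nested radical $\sqrt{3+c^2}$ must first be cleared (by isolating the radical-carrying part and squaring) before one is left with a polynomial inequality in $c$ on $[0,1)$; I expect each reduced polynomial to be expressible as a sum of terms with nonnegative coefficients in $c$ and $\sqrt3$, exactly in the spirit of the computations for $s_4$ and for $lc(1)-lc(a_4)$ in the preceding proofs. Finally, the degenerate case $c=0$ is handled separately, where $I_4=(2/\sqrt3,2)$ and $q_4(t,\cdot)$ is a genuine polynomial in $d$: there $q_4(0,d)=\tfrac{1}{64}(52-9d^2-12\sqrt3\,d)$ and $q_4(1,d)=1-\tfrac{3\sqrt3}{8}d$ are visibly strictly decreasing for $d>0$, completing the argument.
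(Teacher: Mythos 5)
Your proposal is sound and, for most of its length, retraces the paper's own route: the same substitution $x=\sqrt{1-cd}$, the same reduction via \cref{lem:properties_r4} to showing that $r_4(0,\cdot)$ and $r_4(1,\cdot)$ are strictly increasing on $J_4$, the same separate treatment of $c=0$, and your $t=1$ argument (the upward-opening quadratic $f_1=\partial_x r_4(1,\cdot)$ has $f_1(0)<0$, hence a single positive root, so $f_1(a_4)\geq 0$ forces $f_1>0$ on $(a_4,\infty)\supset J_4$) is exactly the paper's, which in fact redundantly also checks $f_1(1)>0$. The genuine divergence is at $t=0$. Your structural observation is correct: the $x^2$-coefficient of $r_4(0,\cdot)$ equals $\frac{1}{64}\left(12c^2+18c^{-2}+18\right)>0$, so $\partial_x^2 r_4(0,\cdot)$ is a downward quadratic, positive at $0$, with a single positive root, whence $f_0=\partial_x r_4(0,\cdot)$ is strictly increasing then strictly decreasing on $(0,\infty)$ and its minimum over $\overline{J_4}$ sits at an endpoint; this validly reduces the claim to $f_0(a_4)\geq 0$ and $f_0(b_4)\geq 0$. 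The paper reduces differently, by root counting: $f_0$ is a cubic with negative leading coefficient, and the sign pattern $f_0(0)<0$, $f_0(a_4)>0$, $f_0(1)>0$ places its three roots in $(-\infty,0)$, $(0,a_4)$ and $(1,\infty)$, so $f_0>0$ on all of $[a_4,1]\supset J_4$. The paper's choice buys a real computational saving: the evaluation point $x=1$ is radical-free, whereas your endpoint $b_4=\frac{\sqrt{3}}{3}s\left(\sqrt{3+c^2}-c\right)$ drags the nested radical $\sqrt{3+c^2}$ into $f_0(b_4)$, forcing an isolate-and-square step before one even reaches a polynomial inequality in $c$. Your inequality $f_0(b_4)\geq 0$ is true --- it follows a posteriori from the paper's conclusion, since $a_4<b_4<1$ --- but verifying it directly is the heaviest computation in your plan, and the paper's design avoids it entirely.

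One caution about the steps you defer with ``I expect\dots'': the endpoint verifications at $a_4$ are not of the manifest-positivity type used for $s_4(0)$ and $s_4(1)$. Writing $f_0(a_4)=\frac{s(1-c)}{16c}g_0(c)$ and $f_1(a_4)=\frac{\sqrt{3}s(1-c)}{8c}g_1(c)$ as the paper does, neither $g_0$ nor $g_1$ has nonnegative coefficients in $c$ and $\sqrt{3}$ (note the terms $9\left(1-2\sqrt{3}\right)c$ and $-5\sqrt{3}c^2$ in $g_0$); the paper needs ad hoc splittings $g_0=g_{01}+g_{02}+g_{03}$ and $g_1=g_{11}+g_{12}$ together with genuine minimum estimates, such as $g_{02}(c)\geq -3\sqrt[6]{3^5}\sqrt[3]{\left(\frac{2}{9+\sqrt{3}}\right)^{2}}$ and $g_{12}(c)>-0.05$. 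So your skeleton is correct and would compile into a proof, but all the actual work lives in the three positivity certificates you have not carried out, and the one at $b_4$ --- absent from the paper's streamlined version --- would be harder still.
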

\begin{proof}
  Let us first observe that for $c=0$ we have $q_4(0,d)=\frac{1}{64} \left(52-12 \sqrt{3} d-9 d^2\right)$
  and $q_4(1,d)=1-\frac{3 \sqrt{3} d}{8}$ which are obviously strictly decreasing on $I_4$.\\
  If $0<c<1$ we again consider the function $r_4(t,x)=q_4\left(t,\frac{1-x^2}{c}\right)$.
  By \cref{lem:properties_r4} the proof will be complete if we verify that
  $r_4(0,\cdot)$ and $r_4(1,\cdot)$ are strictly increasing on $J_4$.\\
  Recall that $r_4(t,\cdot)$ is a quartic polynomial with the leading coefficient
  $-\frac{9}{64 c^2}\left(1-t^2\right)$. Consequently, $f_0(x):=\frac{dr_4}{dx}(0,x)$ is
  a cubic polynomial with the negative leading coefficient and
  $f_1(x):=\frac{dr_4}{dx}(1,x)$ is quadratic with the positive leading coefficient
  $\tfrac{9\sqrt{3}s}{8c}$. So, it suffices to show that
  $f_i(0)<0$, $f_i(a_4)>0$ and $f_i(1)>0$ for $i=0,1$. By straightforward computations we have
  \begin{equation*}
    f_0(0)=-\frac{\sqrt{3} \left(3+5 c^2\right) s}{16 c}<0,\quad
    f_1(0)=-\frac{\sqrt{3} \left(3+c^2\right) s}{8 c}<0,\quad
  \end{equation*}
  and
  \begin{equation*}
    f_0(1)=\frac{9c+6c^3+\sqrt{3}s(6-5c^2)}{16c}>0,\quad
    f_1(1)=\frac{12c+\sqrt{3}s\left(6-c^2\right)}{8c}>0.
  \end{equation*}
  In order to show $f_i(a_4)>0$, $i=0,1$, we observe that $f_0(a_4)=\frac{s(1-c)}{16c}\,g_0(c)$
  and $f_1(a_4)=\frac{\sqrt{3}s(1-c)}{8c}g_1(c)$,
  where
  \begin{align*}
    g_0(c)&=6 \left(3+\sqrt{3}\right)+9 \left(1-2 \sqrt{3}\right) c-5 \sqrt{3} c^2+2
   \left(3+8 \sqrt{3}\right) c^3-2 \left(9+4 \sqrt{3}\right)
   c^4+\left(9+2 \sqrt{3}\right) c^5-\sqrt{3} c^6,\\
   g_1(c)&=6-4 \left(3-\sqrt{3}\right) c-\left(13-6 \sqrt{3}\right) c^2+9
   c^3+\left(3-6 \sqrt{3}\right) c^4+3 c^5.
  \end{align*}
  Let us write $g_0=g_{01}+g_{02}+g_{03}$, where
  \begin{align*}
    g_{01}(c)&=6 \left(3+\sqrt{3}\right)-9 \left(-1+2 \sqrt{3}\right) c,\\
    g_{02}(c)&=-5 \sqrt{3} c^2+\left(9+\sqrt{3}\right) c^5,\\
    g_{03}(c)&=c^3 \left(2 \left(3+8 \sqrt{3}\right)-2 \left(9+4 \sqrt{3}\right)
   c+\sqrt{3} c^2 (1-c)\right).
  \end{align*}
  It is now easy to verify $g_{01}(c)>27-12\sqrt{3}\approx 6.22$,
  $g_{02}(c)\geq -3 \sqrt[6]{3^{5}}\sqrt[3]{\left(\frac{2}{9+\sqrt{3}}\right)^{2}}\approx-2.45$,
  $g_{03}(c)>0$, thus $g_0(c)>0$ and consequently $f_0(a_4)>0$. Similarly, let us write
  $g_1=g_{11}+g_{12}$, where
  \begin{equation*}
    g_{11}(c)=6-4 \left(3-\sqrt{3}\right) c\quad {\rm and}\quad
    g_{12}(c)=c^2 \left(-13+6 \sqrt{3}+9 c+\left(3-6 \sqrt{3}\right) c^2+3 c^3\right).
  \end{equation*}
  Again, we can bound $g_{11}(c)>4\sqrt{3}-6\approx 0.93$ and $g_{12}(c)>-0.05$, thus $g_1(c)>0$ and
  so $f_1(a_4)>0$. Note that the exact value of $g_{12}$ can be given in a closed form but it is
  to complicated to be written here.
\end{proof}

From the previous proofs the properties (P1) and (P2) follow also for the quartic $G^2$ case.
Let us demonstrate the results by some examples. Consider the quartic case with
$\varphi=\pi/4$. According to \cref{lem:P1_quartic}, \cref{lem:lc_quartic} and
\cref{lem:P2_quartic}, there is the unique best $G^2$ interpolant of the circular arc with respect to
the radial error. From the above proofs of the lemmas it can be seen that there might be another
$G^2$ interpolant for which the radial error alternates three times. More precisely, some analysis
based on numerical computations actually reveals that this happens for all angles $\varphi<\arccos{3/5}$.
Indeed, such an interpolant exists
but its radial error is much bigger that the error of the optimal interpolant. Numerical experiments
indicate that it is quite difficult to find the best interpolant numerically directly by constructing
an appropriate equation for the unknown $d$. The bisection algorithm described in \cref{sec:general_error}
should be used instead. The results are shown in \cref{fig:quarticpi4}.

\begin{figure}[!htb]
  \minipage{0.49\textwidth}
    \centering
    \includegraphics[width=0.8 \linewidth]{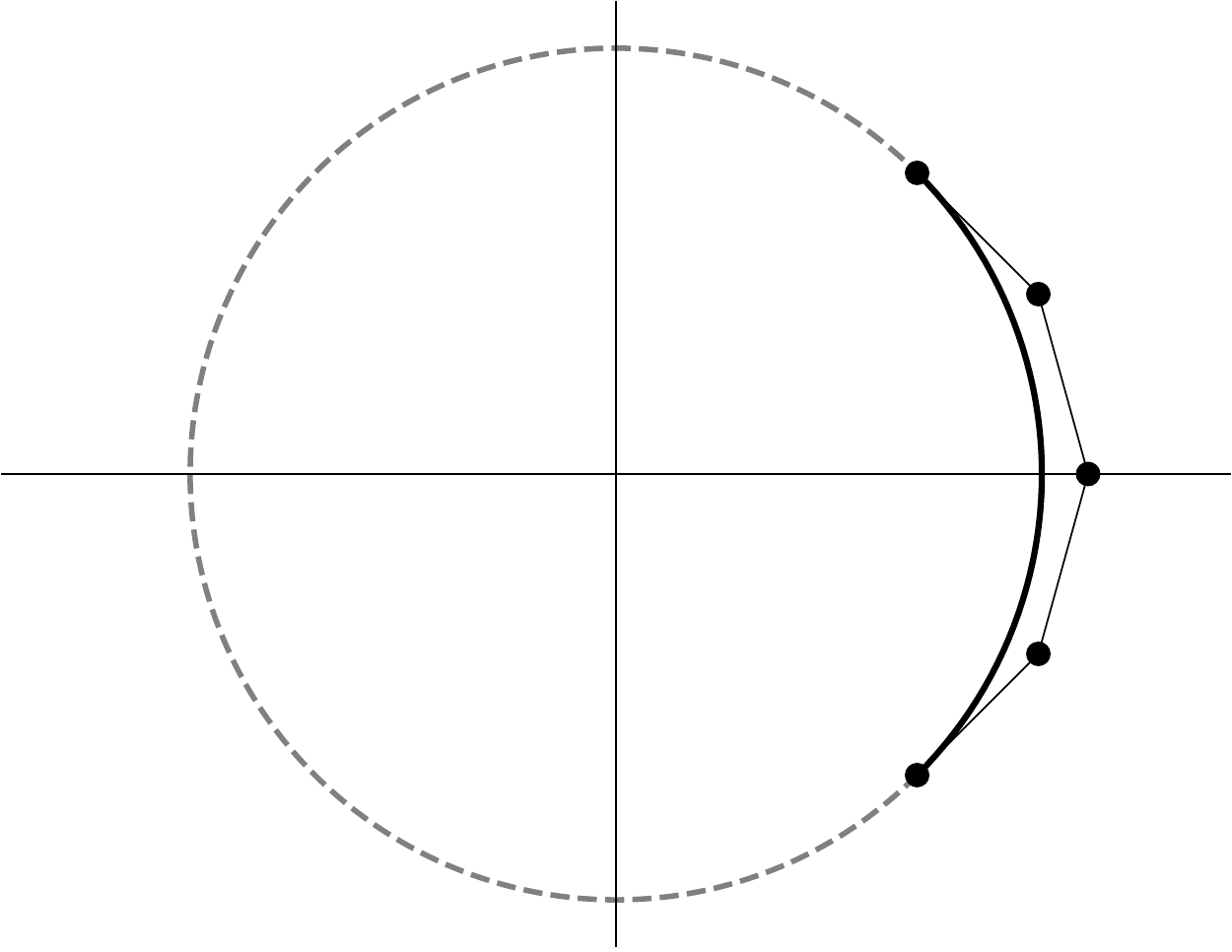}
  \endminipage\hfill
  \minipage{0.49\textwidth}
    \centering
    \includegraphics[width=0.8\linewidth]{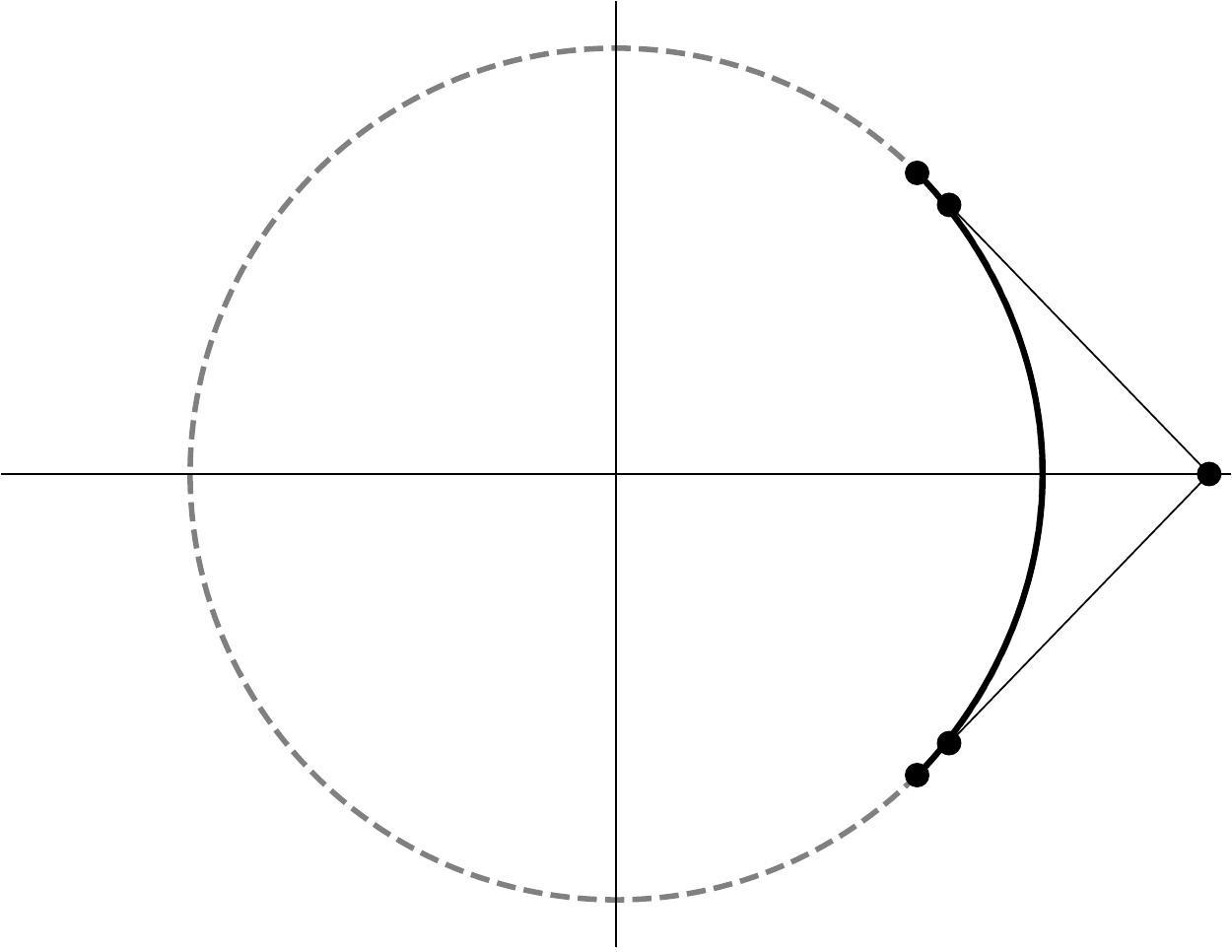}
  \endminipage\hfill
  \caption{Graphs of two quartic $G^2$ interpolants (solid black)
  of the circular arc given by the inner angle $\pi/2$ (dashed gray). The best one is on the left.
  The maximum of its radial error is approximately $2.5\times 10^{-6}$.
  The interpolant on the right has alternating radial error too,
  but its maximal value is approximately  $2\times 10^{-3}$ and it is much bigger than for the optimal interpolant.}
  \label{fig:quarticpi4}
\end{figure}

\section{Quintic $G^3$ case}
Here we consider the $G^3$ interpolation which implies the control points
\begin{align*}
  \bfm{b}_0&=(c,-s)^T,\quad
  \bfm{b}_1=(c,-s)^T+d(s,c)^T,\quad
  \bfm{b}_2=\left(\frac{5d(4-5d^2)c+4(2+5d^2)s}{4(5d+2 s c)},-\frac{5d((4-5d^2)s-6 d c)}{4(5d+2 s c)}\right)^T,\\
  \bfm{b}_3&=\left(\frac{5d(4-5d^2)c+4(2+5d^2)s}{4(5d+2 s c)},\frac{5d((4-5d^2)s-6 d c)}{4(5d+2 s c)}\right)^T,\quad
  \bfm{b}_4=(c,s)^T+d(s,-c)^T\quad \bfm{b}_5=(c,s)^T,
\end{align*}
with $d<0$ due to the $G^1$ continuity. The corresponding simplified error function is
$\psi_5(t,d)=(t^2-1)^4 q_5(t,d)$, where $q_5(\cdot,d)$ is a quadratic polynomial with rational coefficients in $d$.
Let us define
\begin{equation*}
  0<\alpha_5:=\frac{2s}{5}\leq a_5:=\frac{16s}{25+15c}<b_5:=\frac{2s}{3+2c}
  <\beta_5:=\frac{4s}{5(1+c)}<1,
\end{equation*}
and $I_5:=(a_5,b_5)$.
Similarly as in the previous cases, we will confirm the properties (P1) and (P2) (see \cref{fig:quintic_general}
for better understanding the proofs in the following).

\begin{figure}[h]
  \centering
  \includegraphics[width=0.5\textwidth]{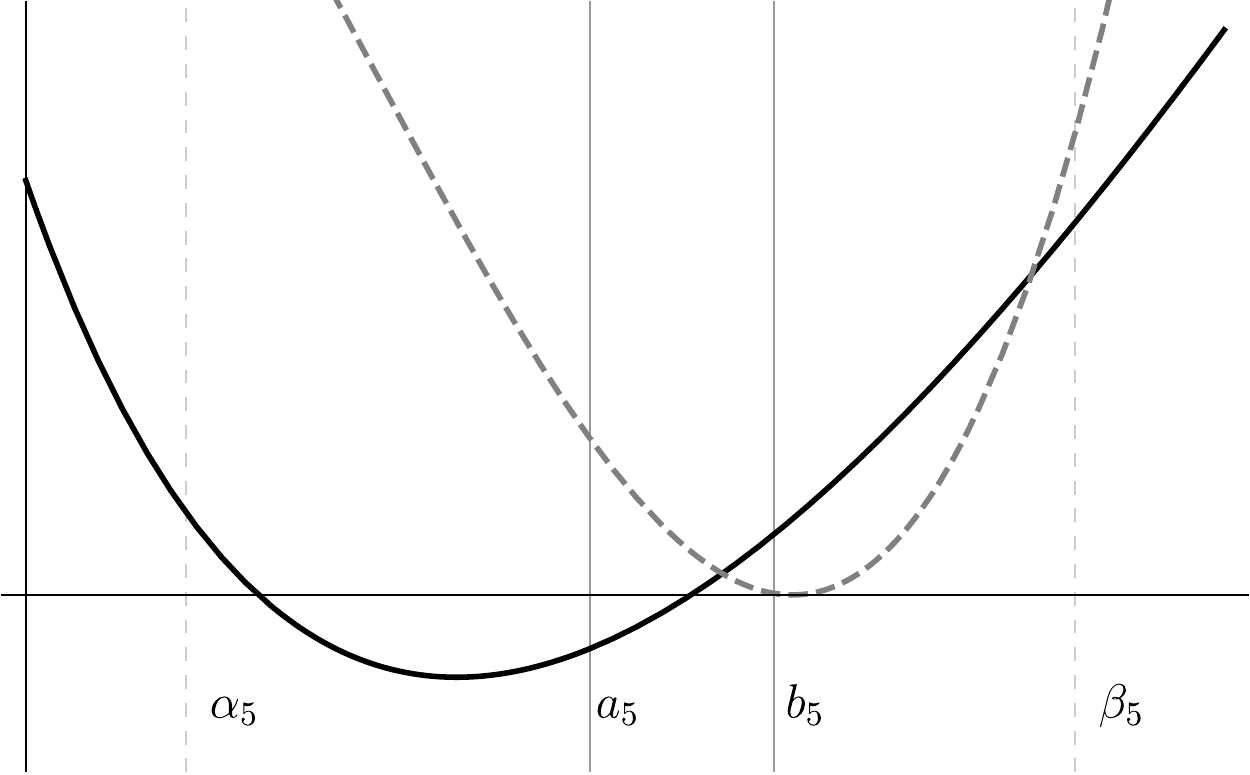}
  \caption{A characteristic graph of $q_5(\cdot,d)$ (black) and its leading coefficient
  $lc$ (gray dashed).}
  \label{fig:quintic_general}
\end{figure}

Let us first consider the behavior of the leading coefficient
$lc$ of $q_5(\cdot,d)$ which is of the form $lc(d)=(r(d))^2$, where
\begin{equation*}\label{eq:f}
 r(d)=\frac{125 s d^3+100c d^2-20s(3+c^2)d+16 c s^2}{32(5d+2 s c)}.
\end{equation*}

\begin{lemma}\label{quintic_leading}
The leading coefficient $lc$ of the polynomial $q_5(\cdot,d)$ is a decreasing function of the variable $d$
on $\left(\alpha_5,b_5\right)\supset I_5$ and it is an increasing function on
$\left(\beta_5,\infty\right)$.
\end{lemma}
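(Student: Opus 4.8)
The plan is to exploit the special structure $lc(d)=\bigl(r(d)\bigr)^2$, so that
\[
lc'(d)=2\,r(d)\,r'(d),
\]
and the sign of $lc'$ factors as $\operatorname{sign}r\cdot\operatorname{sign}r'$. I therefore intend to determine the sign of $r$ and of $r'$ separately on the two relevant ranges. Writing $r(d)=N(d)/\bigl(32(5d+2sc)\bigr)$ with the cubic numerator $N(d)=125\,s\,d^3+100\,c\,d^2-20\,s(3+c^2)d+16\,c\,s^2$, I first note that on $(\alpha_5,\infty)$ we have $d>\alpha_5=\tfrac{2s}{5}>0$, hence $5d+2sc>0$; thus $\operatorname{sign}r=\operatorname{sign}N$ there, and all denominators below are positive.

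For the monotonicity of $r$ I would differentiate and reduce to an auxiliary cubic: a direct computation gives $r'(d)=\dfrac{5\,Q(d)}{16(5d+2sc)^2}$ with $Q(d)=125\,s\,d^3+25\,c(3s^2+2)d^2+40\,s\,c^2 d-4\,c\,s^2(5+c^2)$, so $\operatorname{sign}r'=\operatorname{sign}Q$. The point is that $Q'(d)=375\,s\,d^2+50\,c(3s^2+2)d+40\,s\,c^2>0$ for all $d>0$, so $Q$ is strictly increasing on $(0,\infty)$; evaluating at the left endpoint yields $Q(\alpha_5)=8\,s^2\bigl(1+c^2-2c^3\bigr)$, which is positive on $[0,1)$. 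Consequently $Q>0$, and hence $r'>0$, on the whole of $[\alpha_5,\infty)$; that is, $r$ is \emph{strictly increasing} there.

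It then remains to locate the sign change of $r$. I would evaluate $N$ at the two interior breakpoints and substitute $s^2=1-c^2$, expecting the clean factorizations
\[
N(b_5)=-\frac{8\,s^2(1-c)^2\bigl(4c^2-4c+10\bigr)}{(3+2c)^3}<0,\qquad
N(\beta_5)=\frac{16\,s^2(1-c)^2}{(1+c)^2}\ge 0 .
\]
Since $r$ is increasing on $(\alpha_5,\infty)\supset[b_5,\beta_5]$ and $r(b_5)<0\le r(\beta_5)$, there is a unique root $d_0\in(b_5,\beta_5)$ with $r<0$ on $(\alpha_5,b_5)$ and $r>0$ on $(\beta_5,\infty)$. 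Combining with $r'>0$ gives $lc'=2\,r\,r'<0$ on $(\alpha_5,b_5)\supset I_5$ and $lc'>0$ on $(\beta_5,\infty)$, which is exactly the claim (and matches the dashed graph in \cref{fig:quintic_general}).

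The main obstacle is the three endpoint inequalities, and above all $Q(\alpha_5)>0$: this is what guarantees that the local minimum of $r$ falls to the \emph{left} of $\alpha_5$, so that $r$ is genuinely monotone on the entire interval rather than merely eventually so (near $d=0$ one has $Q(0)=-4c s^2(5+c^2)<0$, so $r$ does decrease first). Each inequality reduces, after inserting $s^2=1-c^2$, to showing a polynomial in $c$ is nonnegative on $[0,1)$; the delicate point is extracting factorizations of the form $(1-c)^2$ times a manifestly positive cofactor (e.g. $4c^2-4c+10=4(c-\tfrac12)^2+9$ and $1+c^2-2c^3>0$ on $[0,1)$), which is where the bookkeeping must be done carefully.
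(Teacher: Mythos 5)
Your proof is correct and takes essentially the same route as the paper: both write $lc'=2\,r\,r'$, establish $r'>0$ on $(\alpha_5,\infty)$ by showing the cubic numerator of $r'$ is increasing for $d>0$ and positive at $\alpha_5$ (your $Q(\alpha_5)=8s^2(1+c^2-2c^3)=8s^2(1-c)(2c^2+c+1)>0$ is exactly the paper's $r'(\alpha_5)>0$ up to the positive denominator), and then read off the sign of $r$ from its values at $b_5$ and $\beta_5$. Your numerator evaluations $N(b_5)<0$ and $N(\beta_5)>0$ agree with the paper's $r(b_5)<0<r(\beta_5)$ after dividing by $32(5d+2sc)>0$, so the differences are purely cosmetic.
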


\begin{proof}
Recall that $lc(d)=(r(d))^2$.
The denominator of the derivative
$$
r'(d)=\frac{5(125 s d^3+25c(2+3s^2)d^2+40 s c^2 d - 4 c s^2(5+c^2))}{16(5d+2 s c)^2}
$$
is positive and the nominator is an increasing function of the variable $d\in[\alpha_5,\infty)$.
Since $r'(\alpha_5)=\tfrac{5(1-c)}{8(1+c)^2}(1+c+2c^2)>0$, the function $r$ is increasing on
$(\alpha_5,\infty)$. Since
\begin{equation*}
r\left(b_5\right)=-\frac{(1-c)^2(5-2c+2c^2)s}{4(3+2c)^2(5+3c+2c^2)}<0 \quad {\rm and}\quad
r\left(\beta_5\right)=\frac{(1-c)^2s}{4(1+c)(2+c+c^2)}>0.
\end{equation*}
the result of the lemma follows.
\end{proof}

\begin{lemma}
For any $t\in(0,1)$ the function $q_5(t,\cdot)$ is a convex function on the interval
$(0,\beta_5)\subset (0,1)$.
\end{lemma}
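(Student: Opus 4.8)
The plan is to exploit that, for fixed $d$, $q_5(\cdot,d)$ is an \emph{even} quadratic polynomial, so that it can be written in the form $q_5(t,d)=lc(d)\,t^2+q_5(0,d)$, where $lc(d)=r(d)^2$ is the leading coefficient studied in \cref{quintic_leading}. With this representation the convexity of $q_5(t,\cdot)$ on $(0,\beta_5)$ is exactly the statement that the second derivative with respect to $d$ is non-negative, and the key observation is that
\[
  \frac{\partial^2 q_5}{\partial d^2}(t,d)=lc''(d)\,t^2+\frac{\partial^2 q_5}{\partial d^2}(0,d)
\]
is an \emph{affine} function of $t^2$. Hence, for fixed $d$, its values over $t\in[0,1]$ lie between the two endpoint values obtained at $t=0$ and $t=1$.

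First I would therefore reduce the lemma to the two one-variable inequalities $\frac{\partial^2 q_5}{\partial d^2}(0,d)\ge 0$ and $\frac{\partial^2 q_5}{\partial d^2}(1,d)\ge 0$ on $(0,\beta_5)$; equivalently, that both $q_5(0,\cdot)$ and $q_5(1,\cdot)$ are convex there. Since $t^2\in(0,1)$ in the claim, non-negativity at the two endpoints yields non-negativity on the whole open interval by the affine interpolation above. Note also that $\frac{\partial^2 q_5}{\partial d^2}(1,d)=lc''(d)+\frac{\partial^2 q_5}{\partial d^2}(0,d)$, consistent with $q_5(1,d)=lc(d)+q_5(0,d)$.

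Next, for each of the two endpoints I would write out the explicit rational expression in $d$. Both $q_5(0,d)$ and $q_5(1,d)$ are rational functions whose only denominator is a power of $5d+2sc$, which is strictly positive on $(0,\beta_5)$ because $d>0$, $s>0$ and $c\ge 0$, so that $5d+2sc\ge 5d>0$. Differentiating twice and clearing this positive denominator then turns each inequality into the non-negativity of an explicit polynomial in $d$ with coefficients polynomial in $c$ and $s$.

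Finally, to verify these two polynomial inequalities on $(0,\beta_5)$ I would use the substitution-and-decomposition technique already employed throughout the paper (cf. the decompositions of $s_4$, $g_0$ and $g_1$): rescale by setting $d=\beta_5\,x$ so that $x$ ranges over $(0,1)$, and then exhibit a manifestly non-negative grouping of the resulting terms in the two variables $x\in(0,1)$ and $c\in[0,1)$. I expect the main obstacle to be the endpoint $t=1$: there $q_5(1,d)=lc(d)+q_5(0,d)$ combines the squared rational leading coefficient with the constant term, producing the highest-degree and least transparent numerator, so the real work is finding the explicit sum-of-non-negative-terms decomposition that certifies $\frac{\partial^2 q_5}{\partial d^2}(1,d)\ge 0$.
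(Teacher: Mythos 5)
Your proposal is correct and is essentially the paper's own proof with the two reductions applied in the opposite order: the paper first substitutes $d=\beta_5 x$ and writes the numerator of $g''(x)$, $g(x)=q_5(t,\beta_5 x)$, in the manifestly non-negative basis $\sum_{j=0}^{5}a_j(t)\,x^j(1-x)+a_6(t)\,x^6$ with even quadratic coefficients $a_j$, and then applies exactly your affine-in-$t^2$ endpoint argument coefficientwise via the checks $a_j(0)>0$ and $a_j(1)>0$. The only difference is bookkeeping --- you reduce first to the convexity of $q_5(0,\cdot)$ and $q_5(1,\cdot)$ on $(0,\beta_5)$, leaving two bivariate positivity problems in $(x,c)$, while the paper's decomposition packages the same content into fourteen univariate inequalities in $c$ --- and, like the paper (which says the coefficient checks ``can be easily checked''), you leave the final explicit positivity certificates unexhibited.
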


\begin{proof}
Let us define $g(x)=q_5(t,\beta_5 x)$. Then the function $q_5(t,\cdot)$ is convex
on the interval $(0,\beta_5)$ if and only if the function $g$ is convex on the interval $(0,1)$.
Since
$$
g''(x)=\frac{1-c}{8(1+c)^2 \left(2 x+c+c^2\right)^4}
\left(\sum_{j=0}^5a_j(t)x^j(1-x)+a_6(t)x^6\right),
$$
for some even quadratic polynomials $a_j$, it is enough to show that $a_j(0)>0$ and $a_j(1)>0$
for $j=0,\ldots,6$. The latter can be easily checked and details will be omitted here.

\end{proof}

\begin{lemma}\label{quintic_uniquness}
  For any $t\in(0,1)$ the function  $\psi_5(t,\cdot)$ has exactly two zeros on the interval
  $(0,\beta_5)$. One is on the interval $(\alpha_5,a_5)$ and the other one
  is on the interval $I_5$.
\end{lemma}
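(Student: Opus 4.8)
The plan is to pass from $\psi_5$ to $q_5$, and then to combine the convexity just established with a sign analysis at the three subdivision points $\alpha_5<a_5<b_5$. Fix $t\in(0,1)$. Since $(t^2-1)^4>0$ is a positive constant independent of $d$, the zeros of $\psi_5(t,\cdot)=(t^2-1)^4 q_5(t,\cdot)$ on $(0,\beta_5)$ coincide with those of $q_5(t,\cdot)$, so it suffices to count and locate the zeros of $q_5(t,\cdot)$.

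The structural observation that makes everything transparent is that, as a polynomial in $t$, $q_5(t,d)=lc(d)\,t^2+q_5(0,d)$ is even and quadratic with $lc(d)=(r(d))^2\ge 0$. Hence for each fixed $d$ the map $t\mapsto q_5(t,d)$ is nondecreasing in $|t|$ on $[0,1]$, and its behaviour is governed entirely by the two sections $q_5(0,\cdot)$ and $q_5(1,\cdot)$. I would therefore evaluate these sections at $\alpha_5,a_5,b_5$, expecting the clean identities $q_5(0,\alpha_5)=q_5(0,b_5)=0$ and $q_5(1,a_5)=0$, which is exactly what singles out these three points. Granting them, and noting that $r<0$ on $(\alpha_5,b_5]$ as shown in the proof of \cref{quintic_leading} (so that $lc>0$ at $\alpha_5$, $a_5$ and $b_5$), one gets $q_5(t,\alpha_5)=lc(\alpha_5)\,t^2>0$ and $q_5(t,b_5)=lc(b_5)\,t^2>0$, while $q_5(0,a_5)=-lc(a_5)$ (from $q_5(1,a_5)=0$) yields $q_5(t,a_5)=lc(a_5)(t^2-1)<0$, for every $t\in(0,1)$.

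With the sign pattern $(+,-,+)$ across $\alpha_5<a_5<b_5$ in hand, the conclusion follows at once. The intermediate value theorem produces at least one zero of $q_5(t,\cdot)$ in $(\alpha_5,a_5)$ and at least one in $I_5=(a_5,b_5)$. On the other hand, the preceding lemma tells us that $q_5(t,\cdot)$ is convex on $(0,\beta_5)$, hence has at most two zeros there; since $(\alpha_5,a_5)$ and $(a_5,b_5)$ both lie inside $(0,\beta_5)$, the two zeros already located are all of them. Thus $q_5(t,\cdot)$, and with it $\psi_5(t,\cdot)$, has exactly two zeros on $(0,\beta_5)$, one in $(\alpha_5,a_5)$ and one in $I_5$.

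The real work, and the step I expect to be the main obstacle, is certifying the three scalar relations (or at least the inequalities $q_5(0,\alpha_5)\ge 0$, $q_5(0,b_5)\ge 0$, $q_5(1,a_5)\le 0$). Because $\alpha_5,a_5,b_5$ are rational in $s$ and $c$ and $q_5$ has rational coefficients in $d$, substituting them produces bulky expressions in $c$ with $s^2=1-c^2$, which must be reduced, by clearing denominators and collecting into manifestly signed combinations of powers of $c$ and $(1-c)$ in the spirit of the quartic estimates, before their vanishing or sign can be read off. I anticipate $q_5(1,a_5)=0$ to be the most delicate, since it is precisely this relation that pins the upper zero into $I_5$ rather than merely into $(\alpha_5,b_5)$.
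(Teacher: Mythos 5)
Your logical skeleton coincides with the paper's: the convexity lemma caps the number of zeros of $q_5(t,\cdot)$ (equivalently, of $\psi_5(t,\cdot)$, since $(t^2-1)^4>0$) on $(0,\beta_5)$ at two, and a sign pattern $(+,-,+)$ at $\alpha_5<a_5<b_5$ plus the intermediate value theorem then pins one zero in $(\alpha_5,a_5)$ and one in $I_5$. The paper establishes that sign pattern by direct computation: it evaluates $\psi_5(t,\alpha_5)$, $\psi_5(t,a_5)$, $\psi_5(t,b_5)$ in closed form and exhibits each as a manifestly signed expression in $c$ and $t$. Your reduction of the sign checks to the two sections $q_5(0,\cdot)$, $q_5(1,\cdot)$ via $q_5(t,d)=q_5(0,d)+\bigl(q_5(1,d)-q_5(0,d)\bigr)t^2$ with $lc=(r)^2\geq 0$ is a legitimate streamlining of that verification.

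The genuine gap sits exactly where you place "the real work", and your prediction of what that work yields is wrong in two of three cases. The identity $q_5(0,\alpha_5)=0$ does hold (the paper's formula $\psi_5(t,\alpha_5)=\tfrac{(1-c)^5(1-t^2)^4t^2}{16(1+c)}$ is proportional to $t^2$), but the other two "clean identities" are false: from the paper's closed form at $b_5$,
\begin{equation*}
  q_5(0,b_5)=\frac{16\,c\,(1+c)(1-c)^4\left(180+169c+123c^2+27c^3+c^4\right)}{64\,(3+2c)^4\left(5+3c+2c^2\right)^2},
\end{equation*}
which is strictly positive for every $c\in(0,1)$ and vanishes only in the limiting case $c=0$ (i.e.\ $\varphi=\pi/2$); and $q_5(1,a_5)$ is strictly negative, never zero --- at $c=0$ it is a positive multiple of $-\left(48\cdot 15925-64\cdot 44^2\right)=-640496$. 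So the primary branch of your argument ("Granting them\dots") rests on false premises. Your hedged fallback --- the inequalities $q_5(0,\alpha_5)\geq 0$, $q_5(0,b_5)\geq 0$, $q_5(1,a_5)\leq 0$, combined with $lc>0$ on $[\alpha_5,b_5]$, which you correctly extract from the proof of Lemma~\ref{quintic_leading} --- is logically sufficient, and those three inequalities are in fact true. But you prove none of them; you defer them as the anticipated obstacle. Since these sign verifications are the entire substance of the paper's proof (the convexity bound and the intermediate value argument are the easy part), what you have is an outline of the correct strategy with its essential computational content missing, together with an incorrect guess about what that computation reveals.
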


\begin{proof}
By the previous lemma $\psi_5(t,\cdot)$ has at most two zeros on the interval $(0,\beta_5)$.
The result of the lemma follows since
\begin{align*}
\psi_5\left(t,\alpha_5\right)&=\frac{(1-c)^5(1-t^2)^4t^2}{16(1+c)}>0,\\
\psi_5\left(t,a_5\right)&=-\frac{(1+c)(1-c)^4(1-t^2)^4}{1024(5+3c)^4(8+5c+3c^2)^2}\\
&\times(48 (15925 + 4065 c + 1338 c^2 - 3846 c^3 - 1071 c^4 - 27 c^5) -
 64 (1 - c) (44 + 3 c + 9 c^2)^2 t^2)<0,\\
\psi_5\left(t,b_5\right)&=\frac{(1+c)(1-c)^4(1-t^2)^4}{64(3+2c)^4(5+3c+2c^2)^2}
(16 c (180 + 169 c + 123 c^2 + 27 c^3 + c^4) + 4 (1 - c) (5 - 2 c + 2 c^2)^2 t^2)>0.
\end{align*}
\end{proof}

We are finally ready to confirm the properties (P1) and (P2).

\begin{lemma}
  The functions $q_5(0,\cdot)$ and $q_5(1,\cdot)$ are strictly increasing on $I_5$.
  For every $\tau\in(0,1)$ there exists the unique solution $d_\tau$ of the equation
  $q_5(\tau,d)=0$ on the interval $I_5$. For any other solution $d$ of the equation $q_5(t,d)=0$,
  we have $lc(d)>lc(d_t)$.
\end{lemma}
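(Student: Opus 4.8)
The plan is to prove the three assertions in turn, leaning on the three lemmas just established: \cref{quintic_leading} on the shape of the leading coefficient $lc=r^2$, the convexity lemma for $q_5(t,\cdot)$ on $(0,\beta_5)$, and \cref{quintic_uniquness} on the location of the two zeros of $\psi_5(t,\cdot)$.

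The existence and uniqueness of $d_\tau$ on $I_5$ is essentially immediate. Since $\psi_5(\tau,d)=(\tau^2-1)^4q_5(\tau,d)$ and $(\tau^2-1)^4>0$ for $\tau\in(0,1)$, the zeros of $q_5(\tau,\cdot)$ coincide with those of $\psi_5(\tau,\cdot)$, so \cref{quintic_uniquness} provides exactly one zero on $I_5=(a_5,b_5)$, which I name $d_\tau$, together with exactly one further zero $d_\tau^-$ on $(\alpha_5,a_5)$. For the monotonicity of $q_5(0,\cdot)$ and $q_5(1,\cdot)$ I would compute the partial derivatives $\partial_dq_5(0,d)$ and $\partial_dq_5(1,d)$ directly. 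Each is a rational function whose denominator is a power of the positive quantity $(5d+2sc)^2$, so it suffices to show that the two polynomial numerators are positive on $I_5$. Following the device used throughout the paper, I would substitute $d=a_5+(b_5-a_5)u$ with $u\in(0,1)$, rewrite each numerator as a polynomial in $u$ with coefficients in $c\in[0,1)$, and exhibit it as a sum of manifestly nonnegative terms, exactly as was done for $g_0$ and $g_1$ in the quartic case. Because $q_5(t,d)=lc(d)t^2+q_5(0,d)$, one has $q_5(1,d)=lc(d)+q_5(0,d)$, so the two computations are linked and can be cross-checked against the known decrease of $lc$ on $I_5$.

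For the leading-coefficient comparison I would argue from the geometry of $lc=r^2$ supplied by \cref{quintic_leading}: since $r$ is increasing on $(\alpha_5,\infty)$ with a single sign change at some $d_0\in(b_5,\beta_5)$, the function $lc$ strictly decreases on $(\alpha_5,d_0)$ and strictly increases on $(d_0,\infty)$. As $d_\tau\in I_5\subset(\alpha_5,d_0)$, any competing zero lying to its left but above $\alpha_5$ (in particular $d_\tau^-$) immediately satisfies $lc(d)>lc(d_\tau)$. By the convexity of $q_5(\tau,\cdot)$ on $(0,\beta_5)$ and the sign data from the proof of \cref{quintic_uniquness} (the function is positive at $b_5$ and its two zeros already lie in $(\alpha_5,b_5)$), it is strictly positive on $(d_\tau,\beta_5)$, so no competing zero sits there. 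It then remains to treat zeros in $(\beta_5,\infty)$, where $lc$ increases so that $lc(d)>lc(\beta_5)$; the desired $lc(d)>lc(d_\tau)$ will follow once I establish the boundary inequality $lc(\beta_5)\ge lc(a_5)$, equivalently $r(\beta_5)\ge -r(a_5)$, since $lc(a_5)>lc(d_\tau)$ by the strict decrease of $lc$ and $d_\tau>a_5$.

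The hard part will be this closed-form inequality $lc(\beta_5)\ge lc(a_5)$ together with the two numerator positivity statements of the monotonicity step: these are the genuinely computational obstacles, each reducing to verifying that an explicit polynomial in $c$ (and, after substitution, in $u$) is nonnegative on $[0,1)$. I expect each to yield to the same split-into-positive-summands strategy the authors use in the quartic lemmas, the only real risk being that the boundary inequality needs a more delicate grouping, for which I would bound the single possibly-negative group from below exactly as was done for $g_{02}$ and $g_{12}$. A final point requiring care is the bookkeeping of \emph{all} real zeros of $q_5(\tau,\cdot)$: the argument above confines the relevant competitors to $(\alpha_5,a_5)$ and $(\beta_5,\infty)$, and I would check that the admissible parameter range together with the sign of $q_5(\tau,\cdot)$ rules out any stray zero below $\alpha_5$, so that the comparison $lc(d)>lc(d_\tau)$ holds for every $d\neq d_\tau$.
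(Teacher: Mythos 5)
Your overall architecture coincides with the paper's on two of the three claims. Existence and uniqueness of $d_\tau$ is read off from \cref{quintic_uniquness} exactly as the paper does it, and your leading-coefficient comparison is precisely the paper's argument: competitors left of $d_\tau$ are confined by \cref{quintic_uniquness} to $(\alpha_5,a_5)$, where the decrease of $lc$ gives $lc(d)>lc(a_5)>lc(d_\tau)$; competitors beyond $\beta_5$ satisfy $lc(d)>lc(\beta_5)$; and everything reduces to the boundary inequality $lc(\beta_5)\geq lc(a_5)$, i.e.\ $r(\beta_5)\geq -r(a_5)$ since $r(a_5)<0<r(\beta_5)$ — you even got this sign subtlety right. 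Contrary to your worry, that boundary inequality is not delicate: in the paper it collapses to a single rational expression in $c$ with manifestly positive numerator $112+227c+182c^2+58c^3+6c^4-9c^5$, so no careful grouping is needed. (Your bookkeeping concern about stray zeros below $\alpha_5$ is likewise already settled by \cref{quintic_uniquness}, which locates \emph{both} zeros on $(0,\beta_5)$.) Where you genuinely diverge is the monotonicity step, and there your plan is heavier than the paper's. You propose to differentiate $q_5(0,\cdot)$ and $q_5(1,\cdot)$ directly and certify positivity of the resulting numerators (degree roughly $6$ to $7$ in $d$) on $I_5$ by an affine substitution; the paper instead uses two structural shortcuts. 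For $t=0$ it writes $q_5(0,d)=(f(d))^2-1$, where $f(d)$ is the positive abscissa of $\bfm{p}_5(0)$, so it suffices that $f'>0$, and the numerator of $f'$ is only a cubic $g$, whose positivity on $I_5$ is then checked by exactly the substitution you describe — but on a much smaller polynomial. For $h=q_5(1,\cdot)$ it avoids interval positivity altogether via a derivative cascade: $h''''>0$ everywhere, $h'''(b_5)<0$, $h''(b_5)>0$, $h'(a_5)>0$, which forces $h'>0$ on $I_5$ from four endpoint sign evaluations. Your direct route would likely succeed with computer algebra, but nothing guarantees a single affine substitution renders all coefficients of your high-degree numerators sign-definite, so this is the one place your plan carries real risk that the paper's devices eliminate. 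Finally, a caution about your proposed cross-check: the identity $q_5(1,d)=lc(d)+q_5(0,d)$ cuts against you rather than for you, since $lc$ \emph{decreases} on $I_5$ by \cref{quintic_leading} while $q_5(0,\cdot)$ increases; the claimed increase of $q_5(1,\cdot)$ says the growth of $q_5(0,\cdot)$ dominates the decay of $lc$, which is genuinely new information requiring its own proof — exactly why the paper mounts the separate cascade argument.
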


\begin{proof}
Let us first prove the monotonicity of $q_5(0,d)$. Note that $q_5(0,d)=\psi_5(0,d)=(f(d))^2-1$, where
$$
f(d)=\frac{8(5+3c^2)s+20c(9-c^2)d+150s d^2-125c d^3}{32(5d+2sc)}.
$$
Since $I_5\subset (0,1)$ and $f>0$ on $(0,1)$, it is enough to show that
$f'>0$ on $I_5$. The derivative $f'$ can be written as
$f'(d)=\tfrac5{16}(5d+2sc)^{-2}g(d)$, where $g(d)=-4(5-6c^2+c^4)s+60s^2cd+75s^3d^2-125cd^3$.
The inequality $g(d)>0$ holds for all $d\in I_5$ since
\begin{align*}
&g\left(\frac{2(1-c)s}{75+95c+30c^2}x+\frac{16s}{25+15c}\right)=\frac{4s^3(1-c)^2}{(3+2c)^3(5+3c)^3}
\left((3+2c)^3(335+297c+189c^2+27c^3)\right.\\
&\left.+30(3+2c)^2(8+5c+3c^2)x+3(5+2c+c^2)(3+2c)(1-3c)x^2-2(1-c)cx^3\right)\\
&\geq\frac{4s^3(1-c)^2}{(3+2c)^3(5+3c)^3}\left(3^3\,335+0-90-2\right)>0
\end{align*}
for all $x\in (0,1)$.

Let us now check that the function $h=q_5(1,\cdot)$ is increasing on $I_5$.
Note that
\begin{align*}
h'\left(a_5\right)&=\frac{5 (1-c)^2 s}{64 (5+3 c)^3 \left(8+5 c+3 c^2\right)^3}\\
&\times \left(440863+1380116 c+1950998 c^2+1731140 c^3+984672 c^4+368604 c^5+80946 c^6+8748 c^7+729
   c^8\right)>0,\\
h''\left(b_5\right)&=\frac{375 (1-c)}{32 (3+2
   c)^2 \left(5+3 c+2 c^2\right)^4}\\
   &\times \left(2755+9325 c+15649 c^2+16453 c^3+11136 c^4+4638 c^5+688 c^6-356 c^7-240 c^8-48 c^9\right)>0,\\
h'''\left(b_5\right)&=-\frac{375}{16 (3+2 c) \left(5+3 c+2 c^2\right)^5 s}\\
&\times\left(650+8850 c+24645 c^2+41580 c^3+53039 c^4+52746 c^5+40515 c^6+22080 c^7+7255 c^8+216 c^9\right.\\
&\left.-1016 c^{10}-480 c^{11}-80 c^{12}\right)<0,
\end{align*}
and
$$
  h''''\left(d\right)=\frac{1875}{128} \left(1+\frac{320 \left(5+2 c^2+c^4\right)^2 s^4}{(5 d+2 c s)^6}\right)>0.
$$
These imply that $h'''<0$, $h''>0$ and $h'>0$ on $I_5$.

By Lemma~\ref{quintic_uniquness}, there is the unique solution $d_\tau\in I_5$ of the equation
$q_5(\tau,d)=0$ and for any other solution $d$ we have $d<d_\tau$ or $d>\beta_5$.
By Lemma~\ref{quintic_leading}, the leading coefficient $lc(d)>lc(a_5)>lc(d_\tau)$,
provided $d<d_\tau$, and $lc(d)>lc(\beta_5)$, provided $d>\beta_5$.
Hence it is enough to show that $lc(\beta_5)>lc(a_5)$, which follows from
\begin{align*}
\sqrt{lc\left(\beta_5\right)}-\sqrt{lc\left(a_5\right)}=r(\beta_5)-r(a_5)
&=\frac{s(1-c)^2}{4(1+c)(2+c+c^2)}-\frac{s(1-c)^2(44 + 3 c + 9 c^2)}{4 (5 + 3 c)^2 (8 + 5 c + 3 c^2)}\\
&=\frac{s(1-c)^2\left(112+227c+182c^2+58c^3+6c^4-9c^5\right)}{4\left(1+c\right)\left(2+c+c^2\right)\left(5 + 3 c\right)^2 \left(8 + 5 c + 3 c^2\right)}>0.
\end{align*}
\end{proof}

The following example reveals that there might be several quintic $G^3$ interpolants with the alternating
radial error. Some of them possess quite interesting configuration of the control points
(see \cref{fig:quinticpi4}).

\begin{figure}[!htb]
  \minipage{0.33\textwidth}
    \centering
    \includegraphics[width=1.0 \linewidth]{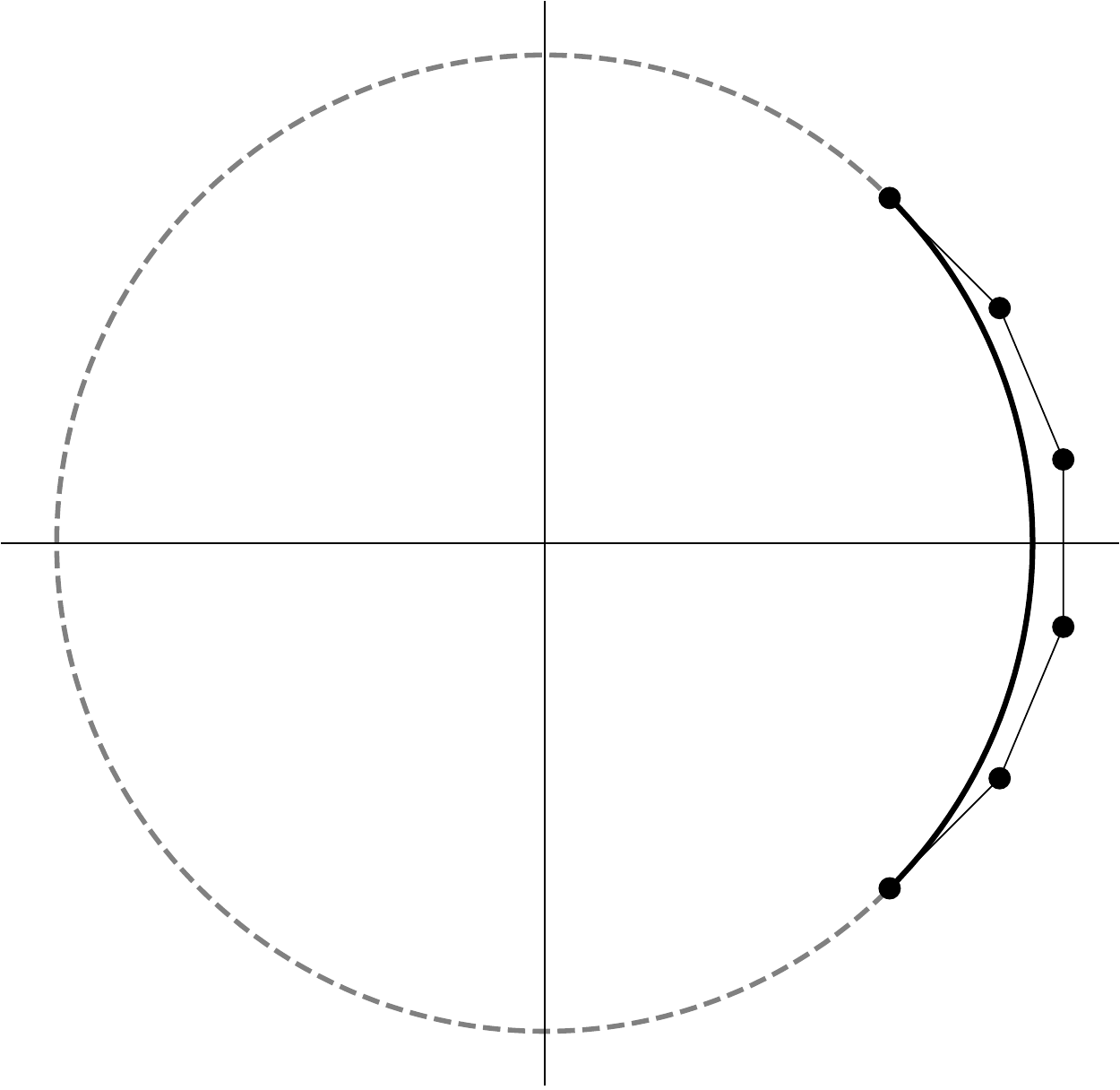}
  \endminipage\hfill
  \minipage{0.33\textwidth}
    \centering
    \includegraphics[width=1.0\linewidth]{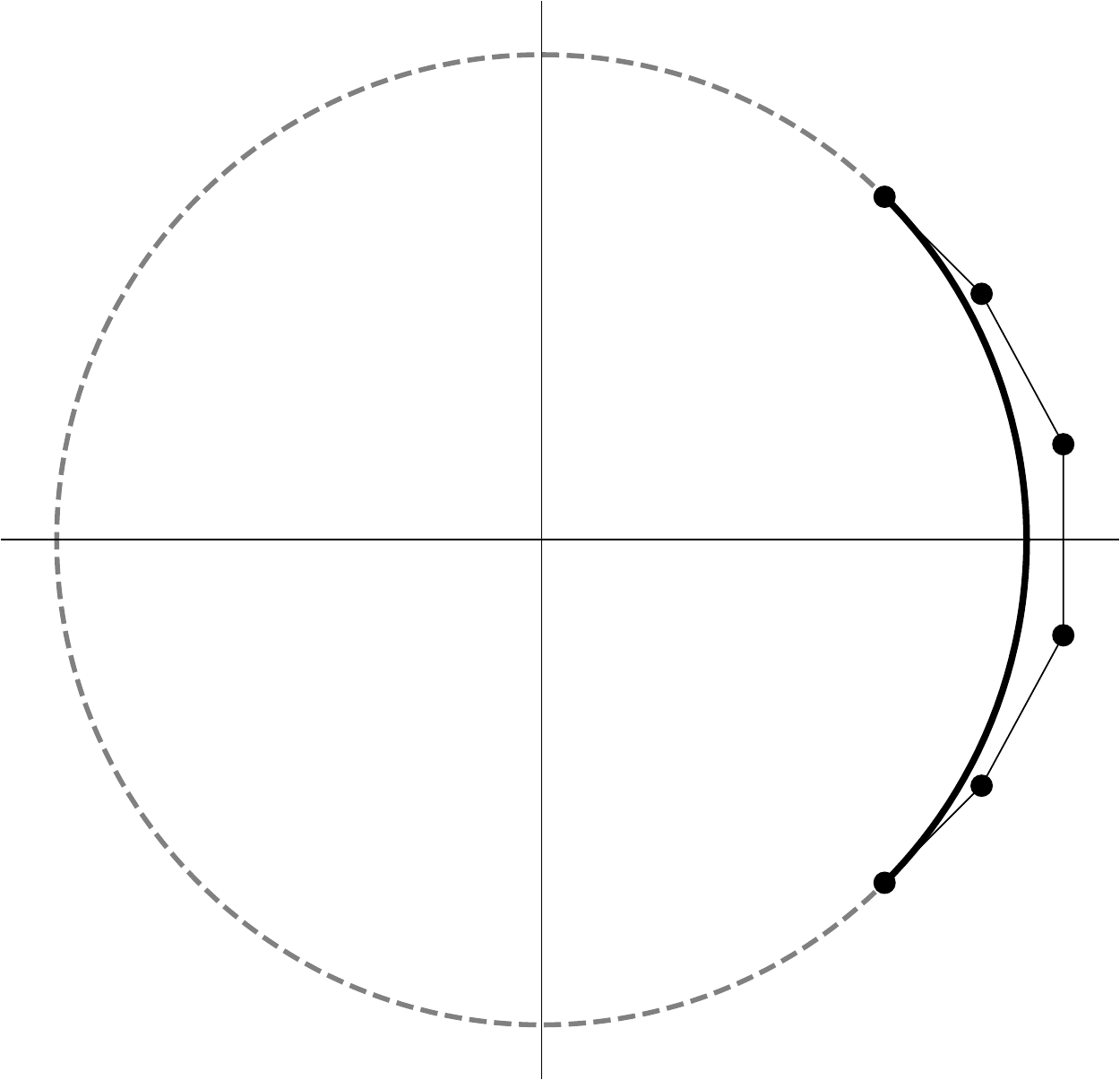}
  \endminipage\hfill
  \minipage{0.33\textwidth}
    \centering
    \includegraphics[width=1.0\linewidth]{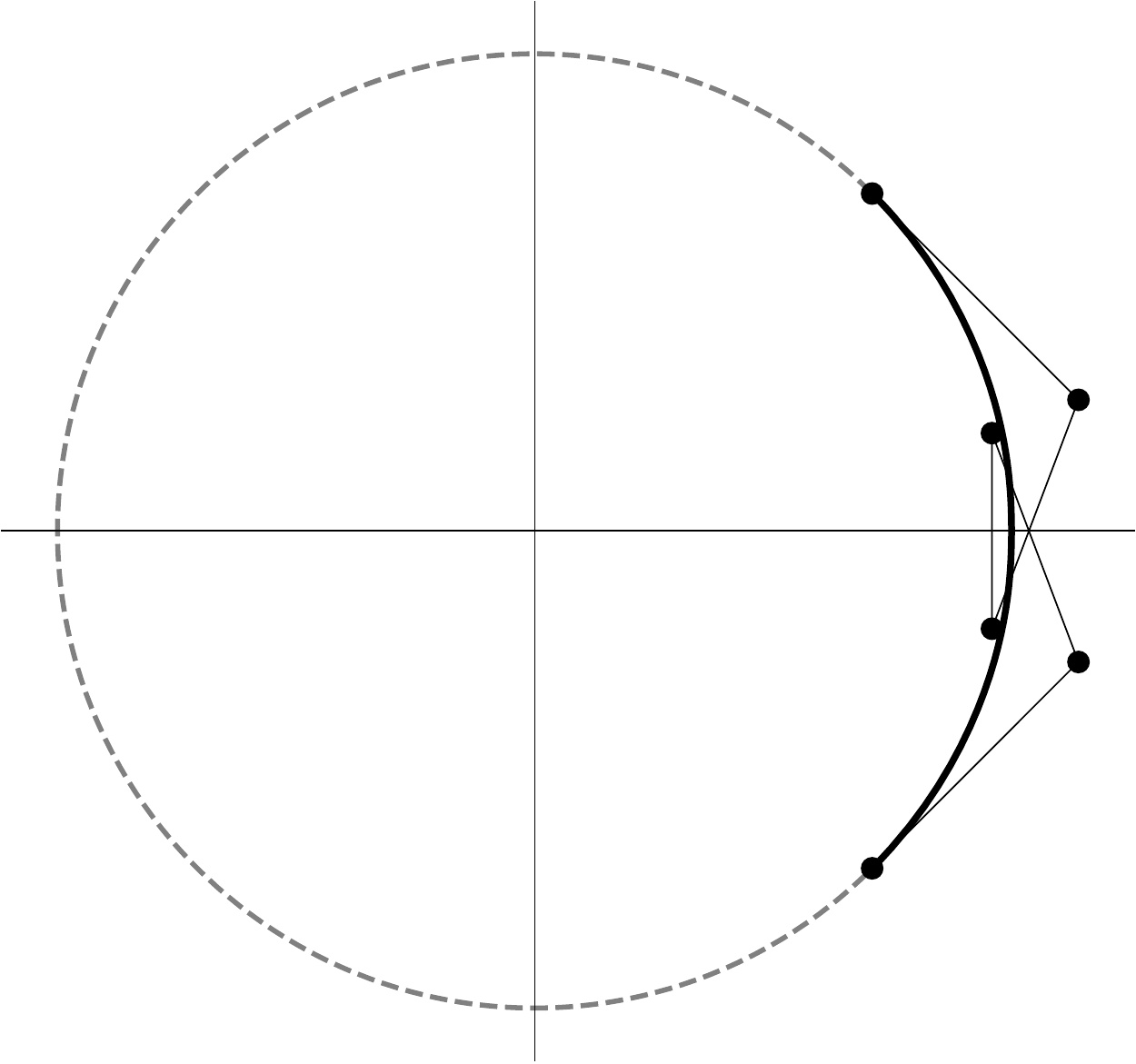}
  \endminipage\hfill

  \caption{Graphs of quintic $G^3$ interpolants (solid black)
  of the circular arc given by the inner angle $\pi/2$ (dashed gray).
  The left one is the best interpolant with the maximum of the radial error
  approximately $2\times 10^{-8}$. The middle one and one on the right
  have an alternating radial error too. But the values of their radial errors
  are approximately $2\times 10^{-6}$ for the middle one,
  and $8\times 10^{-4}$ for the one on the right.}\label{fig:quinticpi4}
\end{figure}

\section{Conclusion}\label{sec:conclusion}

In this paper a new approach to study the circular arc approximation by parametric polynomials
via the geometric interpolation has been considered.
The existence and the uniqueness of the best parametric polynomial
of some particular degree and order of geometric smoothness were confirmed for the simplified
radial error and also for the radial error itself. The later one is a new result which, up to
our knowledge, has not been studied in the literature yet. A general iterative algorithm for
the construction of the best parametric polynomial approximant was established in the case when
the degree of the parametric polynomial $n$ and the order of geometric smoothness $k$ are related
by $k=n-2$. The algorithm is based on a kind of bisection method, thus it is robust and leads to
the desired solution for any starting point of the iteration. Since the cases $n=2,3,4,5$ have been analyzed
only, one can consider the analysis of some higher degrees as a future work. But even more important
is the analysis of some low degree cases for which $n-k>2$. The first step in this direction
was done in \cite{Vavpetic-Zagar-general-circle-19} and in \cite{Vavpetic-CAGD20}
but only for the simplified radial error.
It is clear that the problem of finding the best geometric interpolant in the case of the radial error is
much more challenging issue.


\end{document}